\newtheorem{thm}{Theorem}
\newtheorem{cor}[thm]{Corollary}
\newtheorem{lem}[thm]{Lemma}
\newenvironment{cthm}[1]
  {\innercustomthm}
  {\endinnercustomthm}
\theoremstyle{definition}
\newcommand{\T}{\mathcal{T}}
\newcommand{\TV}{\mathcal{TV}}
\DeclareMathOperator{\outdeg}{outdeg}
\DeclareMathOperator{\lev}{lev}
\DeclareMathOperator{\eld}{eld}
\DeclareMathOperator{\Cat}{Cat}
\begin{document}
\author{Sen-Peng Eu}
\address[Sen-Peng Eu]{Department of Mathematics, National Taiwan Normal University, Taipei 116, Taiwan, ROC}
\email{speu@math.ntnu.edu.tw}

\author{Seunghyun Seo}
\address[Seunghyun Seo]{Department of Mathematics Education, Kangwon National University, Chuncheon 24341, Korea}
\email{shyunseo@kangwon.ac.kr}

\author{Heesung Shin$^\dag$}
\address[Heesung Shin]{Department of Mathematics, Inha University, Incheon 22212, Korea}
\email{shin@inha.ac.kr}
\thanks{\dag Corresponding author}

\title[Enumerations of vertices among all rooted ordered trees]%
{Enumerations of vertices among all rooted ordered trees with levels and degrees}
\date{\today}

\begin{abstract}
In this paper we enumerate and give bijections for the following four sets of vertices among rooted ordered trees of a fixed size: (i) first-children of degree $k$ at level $\ell$, (ii) non-first-children of degree $k$ at level $\ell-1$, (iii) leaves having $k-1$ elder siblings at level $\ell$, and (iv) non-leaves of outdegree $k$ at level $\ell-1$. Our results unite and generalize several previous works in the literature.
\end{abstract}

\maketitle

\section{Introduction}
\label{sec:intro}
Let $\mathcal{T}_n$ be the set of \emph{rooted ordered trees} with $n$
edges. It is well known that the cardinality of $\mathcal{T}_n$ is
the \emph{$n$-th Catalan number}
$$\Cat_n = \frac{1}{n+1}\binom{2n}{n}.$$
For example, there are 5 rooted ordered trees with $3$ edges, see
Figure~\ref{fig:tree}.
\begin{figure}[t]
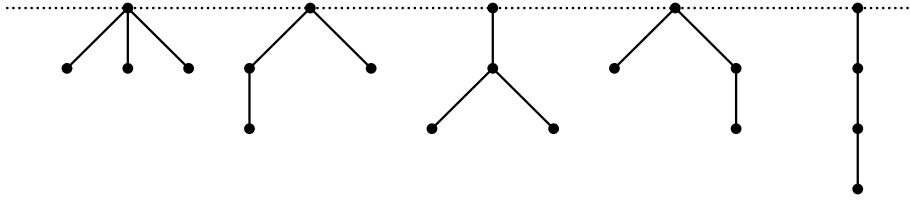

\centering
\begin{pgfpicture}{-2.00mm}{13.44mm}{122.00mm}{42.56mm}
\pgfsetxvec{\pgfpoint{0.80mm}{0mm}}
\pgfsetyvec{\pgfpoint{0mm}{0.80mm}}
\color[rgb]{0,0,0}\pgfsetlinewidth{0.30mm}\pgfsetdash{}{0mm}
\pgfmoveto{\pgfxy(20.00,50.00)}\pgflineto{\pgfxy(10.00,40.00)}\pgfstroke
\pgfmoveto{\pgfxy(20.00,50.00)}\pgflineto{\pgfxy(20.00,40.00)}\pgfstroke
\pgfmoveto{\pgfxy(20.00,50.00)}\pgflineto{\pgfxy(30.00,40.00)}\pgfstroke
\pgfmoveto{\pgfxy(50.00,50.00)}\pgflineto{\pgfxy(40.00,40.00)}\pgfstroke
\pgfmoveto{\pgfxy(50.00,50.00)}\pgflineto{\pgfxy(60.00,40.00)}\pgfstroke
\pgfmoveto{\pgfxy(40.00,40.00)}\pgflineto{\pgfxy(40.00,30.00)}\pgfstroke
\pgfmoveto{\pgfxy(80.00,50.00)}\pgflineto{\pgfxy(80.00,40.00)}\pgfstroke
\pgfmoveto{\pgfxy(80.00,40.00)}\pgflineto{\pgfxy(70.00,30.00)}\pgfstroke
\pgfmoveto{\pgfxy(80.00,40.00)}\pgflineto{\pgfxy(90.00,30.00)}\pgfstroke
\pgfmoveto{\pgfxy(110.00,50.00)}\pgflineto{\pgfxy(100.00,40.00)}\pgfstroke
\pgfmoveto{\pgfxy(110.00,50.00)}\pgflineto{\pgfxy(120.00,40.00)}\pgfstroke
\pgfmoveto{\pgfxy(120.00,40.00)}\pgflineto{\pgfxy(120.00,30.00)}\pgfstroke
\pgfmoveto{\pgfxy(140.00,50.00)}\pgflineto{\pgfxy(140.00,40.00)}\pgfstroke
\pgfmoveto{\pgfxy(140.00,40.00)}\pgflineto{\pgfxy(140.00,30.00)}\pgfstroke
\pgfmoveto{\pgfxy(140.00,30.00)}\pgflineto{\pgfxy(140.00,20.00)}\pgfstroke
\pgfsetdash{{0.30mm}{0.50mm}}{0mm}\pgfmoveto{\pgfxy(0.00,50.00)}\pgflineto{\pgfxy(150.00,50.00)}\pgfstroke
\pgfcircle[fill]{\pgfxy(140.00,20.00)}{0.56mm}
\pgfsetdash{}{0mm}\pgfcircle[stroke]{\pgfxy(140.00,20.00)}{0.56mm}
\pgfcircle[fill]{\pgfxy(140.00,30.00)}{0.56mm}
\pgfcircle[stroke]{\pgfxy(140.00,30.00)}{0.56mm}
\pgfcircle[fill]{\pgfxy(140.00,40.00)}{0.56mm}
\pgfcircle[stroke]{\pgfxy(140.00,40.00)}{0.56mm}
\pgfcircle[fill]{\pgfxy(140.00,50.00)}{0.56mm}
\pgfcircle[stroke]{\pgfxy(140.00,50.00)}{0.56mm}
\pgfcircle[fill]{\pgfxy(120.00,30.00)}{0.56mm}
\pgfcircle[stroke]{\pgfxy(120.00,30.00)}{0.56mm}
\pgfcircle[fill]{\pgfxy(120.00,40.00)}{0.56mm}
\pgfcircle[stroke]{\pgfxy(120.00,40.00)}{0.56mm}
\pgfcircle[fill]{\pgfxy(110.00,50.00)}{0.56mm}
\pgfcircle[stroke]{\pgfxy(110.00,50.00)}{0.56mm}
\pgfcircle[fill]{\pgfxy(20.00,40.00)}{0.56mm}
\pgfcircle[stroke]{\pgfxy(20.00,40.00)}{0.56mm}
\pgfcircle[fill]{\pgfxy(10.00,40.00)}{0.56mm}
\pgfcircle[stroke]{\pgfxy(10.00,40.00)}{0.56mm}
\pgfcircle[fill]{\pgfxy(20.00,50.00)}{0.56mm}
\pgfcircle[stroke]{\pgfxy(20.00,50.00)}{0.56mm}
\pgfcircle[fill]{\pgfxy(100.00,40.00)}{0.56mm}
\pgfcircle[stroke]{\pgfxy(100.00,40.00)}{0.56mm}
\pgfcircle[fill]{\pgfxy(90.00,30.00)}{0.56mm}
\pgfcircle[stroke]{\pgfxy(90.00,30.00)}{0.56mm}
\pgfcircle[fill]{\pgfxy(70.00,30.00)}{0.56mm}
\pgfcircle[stroke]{\pgfxy(70.00,30.00)}{0.56mm}
\pgfcircle[fill]{\pgfxy(80.00,40.00)}{0.56mm}
\pgfcircle[stroke]{\pgfxy(80.00,40.00)}{0.56mm}
\pgfcircle[fill]{\pgfxy(80.00,50.00)}{0.56mm}
\pgfcircle[stroke]{\pgfxy(80.00,50.00)}{0.56mm}
\pgfcircle[fill]{\pgfxy(60.00,40.00)}{0.56mm}
\pgfcircle[stroke]{\pgfxy(60.00,40.00)}{0.56mm}
\pgfcircle[fill]{\pgfxy(40.00,30.00)}{0.56mm}
\pgfcircle[stroke]{\pgfxy(40.00,30.00)}{0.56mm}
\pgfcircle[fill]{\pgfxy(40.00,40.00)}{0.56mm}
\pgfcircle[stroke]{\pgfxy(40.00,40.00)}{0.56mm}
\pgfcircle[fill]{\pgfxy(50.00,50.00)}{0.56mm}
\pgfcircle[stroke]{\pgfxy(50.00,50.00)}{0.56mm}
\pgfcircle[fill]{\pgfxy(30.00,40.00)}{0.56mm}
\pgfcircle[stroke]{\pgfxy(30.00,40.00)}{0.56mm}
\end{pgfpicture}%
\caption{The five trees in $\mathcal{T}_3$}
\label{fig:tree}
\end{figure}
Clearly the number of vertices among trees in $\mathcal{T}_n$ is
$$(n+1) \Cat_n = \binom{2n}{n}.$$

Given a rooted ordered tree,
a vertex $v$ is a \emph{child} of a vertex $u$ and $u$ is the \emph{parent} of $v$
if $v$ is directly connected to $u$ when moving away from the root.
A vertex without children is called a \emph{leaf}.
Note that by definition the root is not a child.
Vertices with the same parent are called \emph{siblings}.
Since siblings are linearly ordered, when drawing trees
the siblings are put in the left-to-right order.
Siblings to the left of $v$ are called an \emph{elder} siblings of $v$.
The leftmost vertex among siblings is called the \emph{first-child}.
In Figure~\ref{fig:tree}, there are 10 first-children as well as 10 leaves, 
which is precisely a half of all 20 vertices in trees in $\mathcal{T}_3$.

In 1999, Shapiro \cite{Sha99} proved the following using generating functions.
\begin{thm}
\label{thm:base}
For any positive integer $n$, the following four sets of vertices among trees in $\T_n$ are equinumerous:
\begin{enumerate}[(i)]
\item first-children, \label{item:1}
\item non-first-children, \label{item:2}
\item leaves, and \label{item:3}
\item non-leaves. \label{item:4}
\end{enumerate}
Here, the cardinality of each set is $$\frac{1}{2} {2n \choose n},$$ which is a half of the number of vertices among trees in $\T_n$.
\end{thm}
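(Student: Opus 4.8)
The plan is to regard the four sets as two different partitions of a single ground set. Let $V_n$ be the set of all vertices occurring among the trees of $\T_n$, so $\abs{V_n}=\binom{2n}{n}$ by the count recorded above. Let $F$ and $F^{c}$ be its first-children and non-first-children, and $L$ and $L^{c}$ its leaves and non-leaves; both $\{F,F^{c}\}$ and $\{L,L^{c}\}$ partition $V_n$, with every root lying in $F^{c}$ and, for $n\ge1$, in $L^{c}$. I will reduce the theorem to two claims: (a) $\abs{F}=\abs{L^{c}}$, and (b) $\abs{L}=\abs{L^{c}}$, i.e.\ exactly half the vertices are leaves. Granting these, $\abs{F^{c}}=\abs{V_n}-\abs{F}=\abs{V_n}-\abs{L^{c}}=\abs{L}$ yields (ii)$=$(iii) for free, while (b) forces all four cardinalities to equal $\tfrac12\binom{2n}{n}$.

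Claim (a) I would prove by a bijection internal to each tree: each non-leaf is the parent of a unique leftmost child, a first-child, and conversely each first-child has a unique parent, which must be a non-leaf. Hence sending a non-leaf to its first-child is a bijection $L^{c}\to F$, giving $\abs{F}=\abs{L^{c}}$ and in particular (i)$=$(iv).

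The technical heart is Claim (b). I would compute the total number of leaves with the bivariate generating function $G(x,y)=\sum_{T}x^{\,e(T)}y^{\,\ell(T)}$, summed over all finite ordered trees $T$, where $x$ marks edges and $y$ marks leaves. Writing a tree as its root followed by an ordered list of subtrees (the empty list being the one-vertex tree, which contributes the single leaf $y$) gives
\[
G=y+\frac{xG}{1-xG}.
\]
At $y=1$ this returns the Catalan series $C=G(x,1)$, satisfying $C=1+xC^{2}$. Differentiating the functional equation in $y$ and simplifying with $1-xC=1/C$ yields
\[
\left.\frac{\partial G}{\partial y}\right|_{y=1}=\frac{1}{2-C}=\frac12+\frac{1}{2\sqrt{1-4x}},
\]
whose coefficient of $x^{n}$ is $\tfrac12\binom{2n}{n}$ for every $n\ge1$. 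As this series enumerates leaves by total edge count, Claim (b), and with it the theorem, follows.

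I expect Claim (b) to be the only genuine obstacle; part (a) and the complementation step are elementary bookkeeping. A bijective substitute for the generating-function calculation, closer to the spirit of the rest of the paper, runs through the standard contour correspondence between $\T_n$ and Dyck paths of semilength $n$, under which leaves become peaks $UD$. Deleting a marked peak and recording the gap it vacated matches pointed-peak Dyck paths of semilength $n$ with Dyck paths of semilength $n-1$ carrying a distinguished insertion gap, of which there are $(2n-1)\,\Cat_{n-1}=\binom{2n-1}{n-1}=\tfrac12\binom{2n}{n}$. Either route closes the argument.
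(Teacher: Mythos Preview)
Your argument is correct, and its overall architecture coincides with the paper's: the reduction via complementation and your Claim~(a), sending each non-leaf to its unique first-child, is exactly the bijection the paper records (``each non-leaf has its unique first-child''). The one substantive divergence is in Claim~(b). Where you compute $\partial G/\partial y|_{y=1}=1/(2-C)$ and extract $\tfrac12\binom{2n}{n}$ (or, alternatively, use the peak-deletion bijection on Dyck paths), the paper instead cites the Seo--Shin involution \cite{SS02}, a direct involution on the vertex set of $\T_n$ that exchanges leaves with non-leaves tree by tree. Your generating-function route is closer in spirit to Shapiro's original proof; the involution buys a within-tree matching but requires an extra construction. More to the point, the paper's own proof of this theorem is really the specialization of its main result (Theorem~\ref{thm:main}), which refines all four equalities simultaneously by degree $k$ and level $\ell$ via explicit bijections and a lattice-path count; summing over $k$ and $\ell$ recovers Theorem~\ref{thm:base}. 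Your self-contained argument is a perfectly good direct proof but does not pass through that refinement.
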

Seo and Shin \cite{SS02} gave an involution proving \eqref{item:3} and \eqref{item:4} are equinumerous.
The equality with the other two sets is easily seen:
\eqref{item:1} and \eqref{item:4} are equinumerous since each non-leaf has its unique first-child
and the union of \eqref{item:1} and \eqref{item:2} are the same with \eqref{item:3} and \eqref{item:4}.

\subsection{Degree and outdegree}
\label{subsec:degree}

The \emph{degree} of a vertex is the number of edges incident to it.
As every edge has a natural outward direction away from the root,
we can have the notion of the \emph{outdegree} of a vertex $v$,
which is the number of edges starting at $v$ and pointing away from the root.
Since each vertex has a degree and each non-leaf has a positive
outdegree, Theorem~\ref{thm:base} can be restated as follows.

\begin{cthm}{\ref{thm:base}'}[Shapiro, 1999]
Let $n\ge 1$. 
Among trees in $\mathcal{T}_n$, the number of vertices of positive degree
equals twice the number of vertices of positive outdegree.
\end{cthm}

In 2004, Eu, Liu, and Yeh \cite{ELY04} proved combinatorially the
following by constructing a two-to-one correspondence which answered
a problem posed by Deutsch and Shapiro \cite[p.\,259]{DS01}.

\begin{thm}[Eu, Liu, and Yeh, 2004]
\label{thm:ELY}
Let $n\ge 1$. Among trees in $\mathcal{T}_n$, the number of vertices of odd degree equals
twice the number of vertices of odd positive outdegree.
\end{thm}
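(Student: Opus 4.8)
The plan is to prove the generating-function identity $A(x)=2B(x)$, which is equivalent to the theorem, where $A(x)$ and $B(x)$ count (with $x$ marking edges) the pairs $(T,v)$ with $T\in\T_n$ and $v$ a vertex of odd degree, respectively of odd (hence positive) outdegree; extracting $[x^n]$ for $n\ge1$ then yields the statement. I would work throughout with the Catalan series $C=C(x)=\sum_{n\ge0}\Cat_n x^n$, which satisfies $C=1+xC^2$, so that $xC^2=C-1$ and $1-xC^2=2-C$, and with the fact that a (possibly empty) horizontal sequence of subtrees, each carrying its root-edge, has generating function $\frac{1}{1-xC}=C$.

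First I would compute the generating function for pairs $(T,v)$ in which $v$ has a prescribed outdegree $k$, by decomposing $T$ along the path $u_0u_1\cdots u_m=v$ from the root $u_0$ to $v$ (so $m$ is the depth of $v$). At each of the vertices $u_0,\dots,u_{m-1}$ the children other than the next path vertex form a left and a right sequence of subtrees, contributing $C\cdot C$, while the path edge contributes $x$; thus each of the $m$ steps contributes $xC^2$, and the $k$ subtrees hanging below $v$ contribute $(xC)^k$. Summing over $m\ge0$ gives
\begin{equation*}
\sum_{m\ge0}(xC^2)^m\,(xC)^k=\frac{(xC)^k}{1-xC^2}=\frac{(xC)^k}{2-C},
\end{equation*}
and summing over odd $k$ gives $B(x)=\frac{1}{2-C}\cdot\frac{xC}{1-x^2C^2}$. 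For $A(x)$ I would use that the root ($m=0$) has degree equal to its outdegree, while every non-root ($m\ge1$) has degree one more than its outdegree; hence the odd-degree vertices are exactly the odd-outdegree roots together with the even-outdegree non-roots, giving
\begin{equation*}
A(x)=\frac{xC}{1-x^2C^2}+\frac{xC^2}{2-C}\cdot\frac{1}{1-x^2C^2},
\end{equation*}
the two terms being the $m=0$, odd-$k$ sum and the $m\ge1$, even-$k$ sum respectively.

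The crux is then a one-line simplification: over the common denominator $(2-C)(1-x^2C^2)$ the numerator of $A(x)$ is $xC(2-C)+xC^2=2xC$, whence $A(x)=\frac{2xC}{(2-C)(1-x^2C^2)}=2B(x)$ and therefore $A_n=2B_n$. I expect the main obstacle to be carrying out the path decomposition with complete care: correctly separating the left and right sibling subtrees at each path vertex, and treating the root ($m=0$) apart from the non-roots ($m\ge1$), since the clean cancellation to $2xC$ relies on the factor $2-C=1-xC^2$ appearing in exactly the right place. A purely bijective argument, in the two-to-one spirit of the present paper, would instead demand an explicit map realizing $A=2B$ together with a proof that it is well defined and exactly two-to-one, which I regard as the genuinely harder route.
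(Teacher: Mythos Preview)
Your generating-function argument is correct: the path decomposition gives the outdegree-$k$ count $\frac{(xC)^k}{2-C}$, the split into root/non-root handles the degree correctly, and the cancellation $xC(2-C)+xC^2=2xC$ is clean.

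The paper, however, does not prove Theorem~\ref{thm:ELY} directly. It is stated as a cited result and then recovered as a consequence of the stronger Corollary~\ref{thm:ESS1}: for \emph{each} fixed $k\ge1$, the vertices of degree $k$ are exactly twice the vertices of outdegree $k$. That refinement is obtained from the bijections of Section~\ref{sec:proof} between the sets $\mathcal{A}$, $\mathcal{B}$, $\mathcal{D}$ (items \eqref{item:d1}, \eqref{item:d2}, \eqref{item:d4} of Corollary~\ref{cor:degree}), so the paper's route is bijective and yields explicit two-to-one maps together with the level-by-level refinement of the main theorem. Your route is analytic and self-contained, bypassing the bijective machinery entirely. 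It is worth noting that your computation already contains the $k$-by-$k$ refinement implicitly: combining the root term $(xC)^k$ with the non-root term $\frac{xC^2}{2-C}(xC)^{k-1}$ gives $\frac{2(xC)^k}{2-C}$, i.e.\ twice the outdegree-$k$ generating function, so with one extra line you would recover Corollary~\ref{thm:ESS1} as well---though still without the explicit bijections that are the paper's main contribution.
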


In light of these two results, it is natural to ask if more can be
said.
In Corollary~\ref{thm:ESS1} 
we will prove that for any positive integer $k$
the number of vertices of degree $k$ always equals \emph{twice} the number of that of outdegree $k$
among trees in $\mathcal{T}_n$.  

\subsection{Even and odd level}
\label{subsec:level}

A vertex $v$ in a rooted tree $T$ is at \emph{level} $\ell$ if the
distance (number of edges) from the root to $v$ is $\ell$.
By an involution, the following result was obtained by Chen, Li, and Shapiro \cite{CLS07}.

\begin{thm}[Chen, Li, and Shapiro, 2007]
\label{thm:CLS}
The number of vertices at odd levels equals the number of vertices at even levels among trees in $\mathcal{T}_n$.
\end{thm}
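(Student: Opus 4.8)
The plan is to recast the claim as a signed enumeration and then collapse it with a single algebraic identity. For a tree $T$, write $f_T = \sum_{v\in T}(-1)^{\lev(v)}$, so that $f_T$ records the number of even-level vertices of $T$ minus the number of odd-level ones; the theorem is then equivalent to the assertion that $\sum_{T\in\T_n} f_T = 0$ for every $n\ge 1$. Let $C=C(x)=\sum_{m\ge 0}\Cat_m x^m$ be the Catalan generating function, which satisfies $C=1+xC^2$, and introduce the signed generating function $A(x)=\sum_{T}f_T\,x^{|T|}$, the sum ranging over all rooted ordered trees with $|T|$ the number of edges. It then suffices to prove $A(x)=1$.

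First I would extract the recursion governing $f_T$ from the root decomposition. If $T$ has root $r$ and the subtrees hanging below the children of $r$ are $S_1,\dots,S_m$, then every vertex of $S_i$ sits one level deeper in $T$ than in $S_i$, so its sign flips; together with the contribution $+1$ of $r$ this gives $f_T = 1 - \sum_{i=1}^m f_{S_i}$. Translating this through the standard encoding of a tree as a sequence of (edge, subtree) pairs, the constant term $1$ contributes $\sum_{m\ge0}(xC)^m = 1/(1-xC)=C$, while the term $-\sum_i f_{S_i}$ contributes $-xA\cdot\sum_{m\ge1}m(xC)^{m-1} = -xA/(1-xC)^2 = -xC^2A$, the factor $m$ arising from the choice of which subtree carries the signed weight $A$ and the remaining geometric factor $1/(1-xC)^2=C^2$ from the other subtrees. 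Hence $A = C - xC^2A$, that is, $A(1+xC^2)=C$.

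The whole computation now hinges on the Catalan identity $1+xC^2=C$: substituting it into $A=C/(1+xC^2)$ gives $A=C/C=1$. Reading off coefficients, $[x^n]A = \sum_{T\in\T_n} f_T$ vanishes for all $n\ge1$ (only the one-vertex tree at $n=0$ survives), which is exactly the theorem.

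I expect the generating-function route to have no real obstacle beyond the bookkeeping just described. A genuinely harder alternative, and the one closer in spirit to this paper, is to produce an explicit level-parity-reversing involution on the set of all vertices occurring in trees of $\T_n$. After encoding trees as Dyck paths—where each non-root vertex corresponds to an up-step whose endpoint height equals the vertex's level, while the roots contribute a uniform $+1$—this becomes the problem of pairing up-steps of even height with up-steps of odd height while correctly absorbing the surplus from the roots. Designing such a pairing so that it is a genuine fixed-point-free involution for $n\ge 1$ is where the main combinatorial work would lie; this is the route taken by Chen, Li, and Shapiro.
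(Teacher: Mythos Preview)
Your generating-function argument is correct: the recursion $f_T = 1 - \sum_i f_{S_i}$ is right, the translation $A = C - xC^2A$ is clean, and collapsing via $1+xC^2=C$ gives $A=1$ as claimed.

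This is, however, a genuinely different route from what the paper does. The paper never proves Theorem~\ref{thm:CLS} by generating functions; it cites the Chen--Li--Shapiro involution and then recovers (and refines) the statement bijectively as a consequence of Theorem~\ref{thm:main}. Concretely, the bijection $\mathcal{A}\leftrightarrow\mathcal{B}$ in Section~\ref{sec:proof} matches first-children of degree $k$ at level $\ell$ with non-first-children of degree $k$ at level $\ell-1$; summing over odd $\ell$ and over even $\ell$ separately and adding gives Corollary~\ref{thm:ESS}, of which Theorem~\ref{thm:CLS} is the further sum over $k$. Your approach is shorter and entirely self-contained, needing nothing beyond $C=1+xC^2$; the paper's approach is longer but strictly more informative, since the same bijection simultaneously refines by both the degree $k$ and the individual level $\ell$, not merely by parity. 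You already note this trade-off accurately in your final paragraph.
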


It is again natural to ask if more can be said.
In Corollary~\ref{thm:ESS} of this paper we will prove that for any positive integer $k$
the number of vertices of degree $k$ at odd
levels always equals the number of vertices of degree $k$ at even
levels among trees in $\mathcal{T}_n$.

\subsection{Main result}
In this paper we generalize the above by regarding both
degrees and levels simultaneously. We will give a combinatorial
proof for the following main result.

\begin{thm}
\label{thm:main}
Given $n \ge 1$,
for any two positive integers $k$ and $\ell$,
there are one-to-one correspondences between the following four sets of vertices among trees in $\T_n$:
\begin{enumerate}[(i)]
\item \label{item:c1} first-children of degree $k$ at level $\ell$,
\item \label{item:c2} non-first-children of degree $k$ at level $(\ell-1)$,
\item \label{item:c3} leaves having exactly $(k-1)$ elder siblings at level $\ell$, and
\item \label{item:c4} non-leaves of outdegree $k$ at level $(\ell-1)$.
\end{enumerate}
Here, the cardinality of each set is
\begin{align}
\label{eq:main}
\frac{k+2\ell-2}{2n-k} { 2n -k \choose n +\ell -1}.
\end{align}
\end{thm}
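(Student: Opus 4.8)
The plan is to read \eqref{eq:main} as a ballot number and collapse all four counts to a single forest‑enumeration. Writing $r=k+2\ell-2$ and $j=n-k-\ell+1$, one checks $2j+r=2n-k$ and $j+r=n+\ell-1$, so \eqref{eq:main} is exactly $\frac{r}{2j+r}\binom{2j+r}{j}$, the number of ordered $r$‑tuples of ordered trees with $j$ edges in total (equivalently $[x^{j}]\,C(x)^{r}$, where $C(x)=\sum_{m\ge0}\Cat_m x^m$ is the Catalan generating function and $[x^{j}]C^{r}=\frac{r}{2j+r}\binom{2j+r}{j}$). So I would biject each of the four vertex sets with the set $\mathcal F_{r,j}$ of such tuples, and then count $\mathcal F_{r,j}$ once.

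The reduction uses a \emph{spine decomposition}. For set (iv), a marked non‑leaf $u$ of outdegree $k$ at level $\ell-1$ singles out the path $u_0=\text{root},\dots,u_{\ell-1}=u$ ($\ell-1$ edges). Detaching, at each $u_i$ with $i\le\ell-2$, the forests of subtrees lying to the left and to the right of $u_{i+1}$ gives $2(\ell-1)$ pieces, and detaching the $k$ subtrees hanging from $u$ gives $k$ more; after the elementary normalization of a hanging forest into a single ordered tree (the relation $\tfrac1{1-xC}=C$), these $2(\ell-1)+k=r$ pieces are ordered trees carrying $n-(\ell-1)-k=j$ edges, while the skeleton depends only on $k,\ell$. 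Reading the pieces in a fixed order (down the spine, left‑then‑right, finishing with the children of $u$) yields a bijection between set (iv) and $\mathcal F_{r,j}$. The mirror decomposition of set (i) — a marked first‑child $v$ of degree $k$ at level $\ell$ — has spine to the parent of $v$ ($\ell-1$ edges, hence $2(\ell-1)$ side‑forests), the younger‑sibling forest of $v$ (one piece), and the $k-1$ subtrees of $v$, again $r$ pieces with $j$ edges; matching the two orderings gives the bijection between (i) and (iv).

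To keep the correspondences genuinely combinatorial and statistic‑preserving I would also supply two local surgeries. For (i)$\leftrightarrow$(iii), swap a first‑child's children with its elder siblings: reinsert the $k-1$ children of a first‑child $v$ at level $\ell$, in order, immediately before $v$, turning $v$ into a leaf with $k-1$ elder siblings at the same level $\ell$; the inverse lifts the $k-1$ elder siblings of such a leaf back up to become its children. For (ii)$\leftrightarrow$(iv), swap a vertex's first child with its nearest elder sibling: move the nearest elder sibling of a non‑first‑child $v$ of degree $k$ at level $\ell-1$ to become the first child of $v$, raising its outdegree to $k$, and conversely push out the first child of a non‑leaf of outdegree $k$; the root is the fixed point when $\ell=1$. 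Each move visibly fixes the level ($\ell$ or $\ell-1$) and the parameter $k$ and is manifestly invertible, so combined with the spine bijection they close the loop among all four sets.

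Finally I would count $\mathcal F_{r,j}$ by the cycle lemma: encode an ordered $r$‑tuple of trees with $j$ edges as a word of $j$ up‑steps and $j+r$ down‑steps whose first passage to height $-r$ occurs only at the last step; among the $\binom{2j+r}{j}$ words of total sum $-r$, exactly $r$ of every $2j+r$ cyclic rotations have this property, giving $\frac{r}{2j+r}\binom{2j+r}{j}=\frac{k+2\ell-2}{2n-k}\binom{2n-k}{n+\ell-1}$, which is \eqref{eq:main}. The main obstacle is designing the two surgeries so that they preserve \emph{both} $k$ and $\ell$ simultaneously — the naive ``each non‑leaf has a unique first‑child'' map underlying Theorem~\ref{thm:base} destroys the degree statistic — together with pinning down a canonical ordering of the spine pieces so that the four decompositions are compatible and the resulting maps are provably bijective.
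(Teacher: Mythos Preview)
Your plan is correct, but the route differs from the paper's in both halves.

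For the bijections, the paper links each of (i), (ii), (iii) directly to (iv) by three local surgeries: for (i)$\leftrightarrow$(iv) it swaps the subtree below the first-child $v$ with the right-forest of its parent $u$, so that $u$ acquires outdegree $k$; for (ii)$\leftrightarrow$(iv) it moves the \emph{first} child of the parent (not the nearest elder sibling, as you do) down to become the first child of $v$; for (iii)$\leftrightarrow$(iv) it slides the right-forest of the parent down onto the marked leaf. Your (ii)$\leftrightarrow$(iv) surgery is a valid variant, and your (i)$\leftrightarrow$(iii) surgery is a bijection the paper does not isolate. Your spine decomposition for (i)$\leftrightarrow$(iv) is considerably heavier machinery than the paper's one-line subtree swap, though of course it doubles as your counting device.

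For the enumeration the difference is sharper. You count set (iv) directly: the spine decomposition bijects it with $\mathcal F_{r,j}$, and the cycle lemma gives $\frac{r}{2j+r}\binom{2j+r}{j}$. The paper instead proves a cumulative \emph{Main Lemma} --- the number of vertices of outdegree $\ge k$ at level $\ge\ell$ in $\T_n$ equals $\binom{2n-k}{n+\ell}$ --- by a bijection to unconstrained lattice paths that concatenates a \L ukasiewicz-type encoding $\psi$ of the subtree at the marked vertex with the standard preorder encoding $\varphi$ of the remaining pieces, followed by a reflection trick; the exact count \eqref{eq:main} then falls out by a four-term inclusion--exclusion in $k$ and $\ell$. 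Your argument is more direct and avoids the sieve; the paper's buys the cumulative binomial formula as an independent byproduct and keeps the whole proof inside the lattice-path picture rather than invoking the cycle lemma.
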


The rest of the paper is organized as follows.
In Section~\ref{sec:coro} we derive several corollaries from Theorem~\ref{thm:main}, including generalizations of Theorem~\ref{thm:base} through \ref{thm:CLS}.
The proof of Theorem~\ref{thm:main} is given in the next two sections:
In Section~\ref{sec:proof} we construct bijections between these four sets
while in Section~\ref{sec:enumeration} we show combinatorially that the cardinality is given by \eqref{eq:main}.

\section{Corollaries}
\label{sec:coro}
In 2012, Cheon and Shapiro \cite[Example 2.2]{CS12} gave a formula for the number of vertices of outdegree $k$ by generating function arguments.
Summing over $\ell \ge 1$ in Theorem~\ref{thm:main} yields the following,
in which the fourth item recovers bijectively the result of Cheon and Shapiro.
\begin{cor}
\label{cor:degree}
Given $n \ge 1$ and $k \ge 1$,
there are one-to-one correspondences between the following four sets of vertices among trees in $\T_n$:
\begin{enumerate}[(i)]
\item \label{item:d1} first-children of degree $k$,
\item \label{item:d2} non-first-children of degree $k$,
\item \label{item:d3} leaves having exactly $(k-1)$ elder siblings, and
\item \label{item:d4} non-leaves of outdegree $k$.
\end{enumerate}
Here, the cardinality of each set is
$$\binom{2n-k-1}{n-1}.$$
\end{cor}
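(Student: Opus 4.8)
The plan is to deduce the corollary directly from Theorem~\ref{thm:main} by summing out the level parameter $\ell$. First I would note that every vertex of a tree sits at a unique level, and that the four families appearing in Theorem~\ref{thm:main} are indexed so that letting $\ell$ range over all positive integers recovers precisely the four level-free families of the corollary: in items \eqref{item:d1} and \eqref{item:d3} the level is $\ell$, while in \eqref{item:d2} and \eqref{item:d4} it is $\ell-1$, so as $\ell$ runs through $1,2,3,\dots$ the relevant levels run through every value that can actually occur. (No first-child, non-first-child, or qualifying leaf lies at level $0$, while the one non-leaf that can sit at level $0$, namely the root, is captured by $\ell=1$.) Hence each set of the corollary is the disjoint union over $\ell\ge 1$ of the corresponding set of Theorem~\ref{thm:main}, and gluing the bijections of Theorem~\ref{thm:main} across all $\ell$ immediately produces the four desired one-to-one correspondences. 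This step is essentially free once the main theorem is in hand.

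The remaining work is purely the evaluation of the cardinality. Summing \eqref{eq:main} over $\ell\ge 1$ gives
\[
\sum_{\ell\ge 1}\frac{k+2\ell-2}{2n-k}\binom{2n-k}{n+\ell-1}.
\]
After the substitution $m=n+\ell-1$ and writing $N=2n-k$, so that $k+2\ell-2=2m-N$, this becomes $\sum_{m\ge n}\frac{2m-N}{N}\binom{N}{m}$, a finite sum since $\binom{N}{m}=0$ for $m>N$. The key algebraic observation is the absorption identity $\frac{m}{N}\binom{N}{m}=\binom{N-1}{m-1}$, which lets me split each term as
\[
\frac{2m-N}{N}\binom{N}{m}=2\binom{N-1}{m-1}-\binom{N}{m}.
\]

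Finally, I would evaluate $2\sum_{m=n}^{N}\binom{N-1}{m-1}-\sum_{m=n}^{N}\binom{N}{m}$ by applying Pascal's rule $\binom{N}{m}=\binom{N-1}{m}+\binom{N-1}{m-1}$ to the second sum; the two resulting partial sums over row $N-1$ cancel against the doubled first sum, leaving exactly the single boundary term $\binom{N-1}{n-1}=\binom{2n-k-1}{n-1}$, as claimed. I do not expect a genuine obstacle here: the only points needing care are the bookkeeping of the summation range (confirming that terms with $m<n$ or $m>N$ contribute nothing, and that the degenerate case $k>n$ makes both sides vanish) and the clean matching of the index shift between the $\ell$-indexed and $(\ell-1)$-indexed families in the first paragraph.
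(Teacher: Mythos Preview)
Your proposal is correct and follows essentially the same route as the paper: sum Theorem~\ref{thm:main} over $\ell\ge 1$ and telescope. The paper compresses your absorption-plus-Pascal manipulation into the single identity
\[
\frac{k+2\ell-2}{2n-k}\binom{2n-k}{n+\ell-1}=\binom{2n-k-1}{n+\ell-2}-\binom{2n-k-1}{n+\ell-1},
\]
which is exactly what your two steps produce once you combine $2\binom{N-1}{m-1}-\binom{N}{m}$ with Pascal's rule; the boundary term $\binom{2n-k-1}{n-1}$ then falls out immediately.
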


\begin{proof}
The result follows by a telescoping summation on $\ell \ge 1$ using the formula
\begin{align*}
\frac{k+2\ell-2}{2n-k} \binom{ 2n -k}{n +\ell -1} = &
\binom{2n -k -1}{n + \ell -2} - \binom{2n -k -1}{n + \ell -1}.
\end{align*}
\end{proof}

The result mentioned in Subsection~\ref{subsec:degree}
can be obtained from \eqref{item:d1}, \eqref{item:d2}, and \eqref{item:d4} of Corollary~\ref{cor:degree}.

\begin{cor}\label{thm:ESS1}
Given $n \ge 1$ and $k \ge 1$,
the number of vertices of degree $k$ equals twice the number of vertices of outdegree $k$
among trees in $\mathcal{T}_n$.
\end{cor}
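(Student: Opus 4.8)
The plan is to deduce the identity straight from Corollary~\ref{cor:degree} by grouping its four sets in two complementary ways; the whole argument is bookkeeping, and the only point needing attention is the treatment of the root.

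First I would count the vertices of degree $k$. Each vertex of a tree either is the leftmost child of its parent or is not, so the vertices fall into the first-children and the non-first-children, the latter class consisting of the non-leftmost children together with the root (which, not being a child, is in particular not a first-child). Intersecting this dichotomy with the vertices of a fixed degree $k$ shows that the set of vertices of degree $k$ is the disjoint union of the first-children of degree $k$ and the non-first-children of degree $k$---that is, of the sets in items~\eqref{item:d1} and~\eqref{item:d2} of Corollary~\ref{cor:degree}. Hence the number of vertices of degree $k$ is the sum of those two cardinalities.

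Next I would count the vertices of outdegree $k$. As $k\ge 1$, any vertex of outdegree $k$ has a child and is therefore a non-leaf, while every non-leaf of outdegree $k$ manifestly has outdegree $k$; thus the vertices of outdegree $k$ are exactly the set in item~\eqref{item:d4}. Finally I would apply Corollary~\ref{cor:degree}, which gives that items~\eqref{item:d1}, \eqref{item:d2} and~\eqref{item:d4} share the common cardinality $\binom{2n-k-1}{n-1}$, so that
\[
\#\{\text{degree }k\}
=\#\{\text{first-children of degree }k\}+\#\{\text{non-first-children of degree }k\}
=2\,\#\{\text{outdegree }k\}.
\]
I do not expect any real obstacle; the only subtlety is keeping the two groupings consistent at the root, which is tallied as a non-first-child in the first grouping and, whenever it has exactly $k$ children, as a non-leaf of outdegree $k$ in item~\eqref{item:d4}. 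Once this convention is fixed the equalities align and the proof is complete.
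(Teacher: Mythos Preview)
Your argument is correct and is exactly the approach the paper indicates: it derives the corollary from items~\eqref{item:d1}, \eqref{item:d2}, and \eqref{item:d4} of Corollary~\ref{cor:degree} by partitioning the degree-$k$ vertices into first-children and non-first-children. The paper merely asserts this without spelling out the root convention, so your added remark that the root counts as a non-first-child (and as a non-leaf when it has $k$ children) only makes the intended argument explicit.
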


Note that Corollary~\ref{thm:ESS1} is a refinement of Theorem~\ref{thm:ELY}.
Summing over all $k$ in Theorem~\ref{thm:main} yields the following.
\begin{cor}
Given $n \ge 1$ and $\ell \ge 1$,
there are one-to-one correspondences between the following four sets of vertices among trees in $\T_n$:
\begin{enumerate}[(i)]
\item first-children at level $\ell$,
\item non-first-children at level $(\ell-1)$,
\item leaves at level $\ell$, and
\item non-leaves at level $(\ell-1)$.
\end{enumerate}
Here, the cardinality of each set is
$$\frac{\ell}{n} \binom{2n}{n+\ell}.$$
\end{cor}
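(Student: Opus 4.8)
The plan is to deduce this corollary from Theorem~\ref{thm:main} by summing over all $k \ge 1$, in direct analogy with the proof of Corollary~\ref{cor:degree} (where instead the sum was taken over $\ell$). The two ingredients, the bijections and the cardinality, are handled separately.

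For the bijections, I would take the disjoint union over $k \ge 1$ of the four sets in Theorem~\ref{thm:main}. Since every child has a well-defined positive degree, the union over $k \ge 1$ of the first-children of degree $k$ at level $\ell$ is exactly the set of all first-children at level $\ell$, and similarly for the non-first-children at level $\ell-1$. Likewise every leaf has a well-defined number $k-1 \ge 0$ of elder siblings and every non-leaf has a well-defined positive outdegree $k$, so the corresponding unions recover all leaves at level $\ell$ and all non-leaves at level $\ell-1$. As the bijections of Theorem~\ref{thm:main} match the four sets stratum-by-stratum in $k$ (for fixed $\ell$), they assemble into bijections between these four aggregated sets, with no compatibility check needed beyond this stratification.

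For the cardinality, I would reuse the two-term expansion already recorded in the proof of Corollary~\ref{cor:degree}, namely
$$\frac{k+2\ell-2}{2n-k}\binom{2n-k}{n+\ell-1} = \binom{2n-k-1}{n+\ell-2} - \binom{2n-k-1}{n+\ell-1}.$$
Summing this over $k$ from $1$ to $n-\ell+1$ (the remaining terms vanish) and re-indexing by $i = 2n-k-1$, each of the two families becomes a single column of Pascal's triangle, so the hockey-stick identity collapses the sum to
$$\binom{2n-1}{n+\ell-1} - \binom{2n-1}{n+\ell}.$$
It then remains to simplify this difference; factoring out $\binom{2n-1}{n+\ell-1}$ and using $\binom{2n-1}{n+\ell-1} = \tfrac{n+\ell}{2n}\binom{2n}{n+\ell}$ yields the claimed value $\frac{\ell}{n}\binom{2n}{n+\ell}$.

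The one point requiring care is that, unlike the sum over $\ell$ in Corollary~\ref{cor:degree}, the sum over $k$ does \emph{not} telescope in the form $b_k - b_{k+1}$: after the expansion, the two binomials have fixed lower indices and a varying upper index, so the correct tool is the hockey-stick summation rather than a telescope. Beyond this bookkeeping the computation is routine, so I do not anticipate a genuine obstacle.
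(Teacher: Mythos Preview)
Your argument is correct, but the cardinality computation takes a different route from the paper's. The paper exhibits a second identity that telescopes directly in $k$:
\[
\frac{k+2\ell-2}{2n-k}\binom{2n-k}{n+\ell-1}
=\frac{k+2\ell-1}{2n-k+1}\binom{2n-k+1}{n+\ell}
-\frac{k+2\ell}{2n-k}\binom{2n-k}{n+\ell},
\]
so the sum over $k\ge 1$ collapses immediately to the $k=1$ term, which is $\tfrac{\ell}{n}\binom{2n}{n+\ell}$. You instead recycle the identity already recorded for Corollary~\ref{cor:degree} and then handle the two resulting column sums with the hockey-stick identity, arriving at $\binom{2n-1}{n+\ell-1}-\binom{2n-1}{n+\ell}$ before simplifying. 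Your approach has the advantage of reusing an existing formula and invoking only a standard summation, at the price of one extra step; the paper's approach preserves the exact parallel with Corollary~\ref{cor:degree} (both pure telescopes), but requires spotting a new two-term decomposition. So your remark that the sum ``does not telescope'' is true only for the particular expansion you chose; with the paper's expansion it does.
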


\begin{proof}
The result follows by a telescoping summation on $k \ge 1$ using the following formula,
\begin{align*}
\frac{k+2\ell-2}{2n-k} \binom{ 2n -k }{ n + \ell - 1}
= \frac{k + 2\ell -1 }{2n-k+1} \binom{2n - k + 1}{n + \ell}
- \frac{k + 2\ell    }{2n-k  } \binom{2n - k    }{n + \ell}.
\end{align*}
\end{proof}

The result mentioned in Subsection~\ref{subsec:level} can be obtained from (\ref{item:c1}) and (\ref{item:c2}) in Theorem~\ref{thm:main}.
\begin{cor}
\label{thm:ESS}
Given $n \ge 1$,
the number of vertices of degree $k$ at odd levels equals the number of vertices of degree $k$ at even levels among trees in $\T_n$.
\end{cor}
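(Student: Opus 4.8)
The plan is to deduce the corollary directly from the correspondence between sets \eqref{item:c1} and \eqref{item:c2} of Theorem~\ref{thm:main}, which asserts that for all $k,\ell\ge 1$ the number of first-children of degree $k$ at level $\ell$ equals the number of non-first-children of degree $k$ at level $\ell-1$. Writing $F(k,\ell)$ and $N(k,\ell)$ for these two counts among trees in $\T_n$, this is precisely the identity $F(k,\ell)=N(k,\ell-1)$. The whole proof is then a matter of bookkeeping the parity of levels.

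First I would recall the convention already in force for Theorem~\ref{thm:base}, namely that the root is regarded as a non-first-child at level $0$ (this is exactly what makes the union of \eqref{item:1} and \eqref{item:2} coincide with the entire vertex set). Under this convention every vertex of degree $k$ is either a first-child or a non-first-child, and the number of degree-$k$ vertices at level $\ell$ equals $F(k,\ell)+N(k,\ell)$ for all $\ell\ge 0$, with $F(k,0)=0$. Hence, letting $E_k$ and $O_k$ be the numbers of degree-$k$ vertices at even and odd levels, the claim $E_k=O_k$ is equivalent to the vanishing of the signed sum $\sum_{\ell\ge 0}(-1)^{\ell}(F(k,\ell)+N(k,\ell))$.

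The key step is to feed $F(k,\ell)=N(k,\ell-1)$ into this signed sum. Reindexing the first-child terms by $m=\ell-1$ gives $\sum_{\ell\ge 0}(-1)^{\ell}F(k,\ell)=\sum_{m\ge 0}(-1)^{m+1}N(k,m)=-\sum_{m\ge 0}(-1)^{m}N(k,m)$, which exactly cancels the non-first-child terms $\sum_{\ell\ge 0}(-1)^{\ell}N(k,\ell)$, so the total is $0$. Equivalently, one checks that both $E_k$ and $O_k$ collapse to $\sum_{m\ge 0}N(k,m)$, the total number of non-first-children of degree $k$. Interpreted bijectively, the correspondence of Theorem~\ref{thm:main} pairs each first-child of degree $k$ at level $\ell$ with a non-first-child of degree $k$ at level $\ell-1$; because the two paired vertices lie at consecutive levels, their signs $(-1)^{\ell}$ and $(-1)^{\ell-1}$ cancel, and ranging over all $\ell\ge 1$ this is a perfect matching covering every degree-$k$ vertex.

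The only delicate point—and the step I expect to be the main obstacle—is the treatment of level $0$: the root sits at an even level and must be counted, so the argument genuinely relies on the convention that the root is a non-first-child (equivalently, on the nonvanishing of set \eqref{item:c2} at level $0$ when $\ell=1$). Once this is pinned down, the parity-flipping nature of the level shift $\ell\mapsto\ell-1$ built into Theorem~\ref{thm:main} does all the work and no further computation is needed.
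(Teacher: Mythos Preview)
Your argument is correct and follows exactly the route the paper indicates: the paper itself gives no detailed proof but simply asserts that Corollary~\ref{thm:ESS} ``can be obtained from \eqref{item:c1} and \eqref{item:c2} in Theorem~\ref{thm:main},'' and you have carried out precisely that derivation. Your handling of the root as a non-first-child at level $0$ matches the paper's convention (stated after Theorem~\ref{thm:base}), and the parity-cancellation via $F(k,\ell)=N(k,\ell-1)$ is the intended mechanism.
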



\section{Proof of Theorem~\ref{thm:main}: Bijections}
\label{sec:proof}
Denote $\mathcal{TV}_n$ by the set of pairs $(T,v)$ satisfying $T \in \mathcal{T}_n$ and $v \in V(T)$.
Given positive integers $n$, $k$, and $\ell$, we let
\begin{enumerate}[(i)]
\item \label{item:e1} $\mathcal{A}$ denote the set of $(T,v) \in \mathcal{TV}_n$ such that $v$ is a first-child of degree $k$ at level $\ell$ in $T$,
\item \label{item:e2} $\mathcal{B}$ denote the set of $(T,v) \in \mathcal{TV}_n$ such that $v$ is a non-first-child of degree $k$ at level $(\ell-1)$ in $T$,
\item \label{item:e3} $\mathcal{C}$ denote the set of $(T,v) \in \mathcal{TV}_n$ such that $v$ is a leaf having exactly $(k-1)$ elder siblings at level $\ell$ in $T$, and
\item \label{item:e4} $\mathcal{D}$ denote the set of $(T,v) \in \mathcal{TV}_n$ such that $v$ is a non-leaf of outdegree $k$ at level $(\ell-1)$ in $T$.
\end{enumerate}


\subsection*{(\ref{item:e1}) $\Leftrightarrow$ (\ref{item:e4}).}
A bijection from $\mathcal{A}$ to $\mathcal{D}$ is constructed as follows:
given $(T,v) \in \mathcal{A}$, find the parent vertex $u$ of $v$.
As the Figure~\ref{fig:1to4}, consider a subtree $D_v$ of $v$ and another subtree $D_u$ of $u$ on the right of the edge $(u,v)$.
By interchanging $D_v$ and $D_u$ we get the new tree $T'$ with the vertex $u$ such that
\begin{align*}
\outdeg(T',u) &= \deg(T,v) = k, & \lev(T',u) &= \lev(T,v) -1 = \ell -1,
\end{align*}
where $\lev(T,v)$ (resp. $\deg(T,v)$, $\outdeg(T,v)$) means the level (resp. degree, outdegree) of a vertex $v$ in a tree $T$.
Thus, $(T',u)$ belongs to $\mathcal{D}$. Since this interchanging action is reversible, it is a one-to-one correspondence.

\begin{figure}[t]
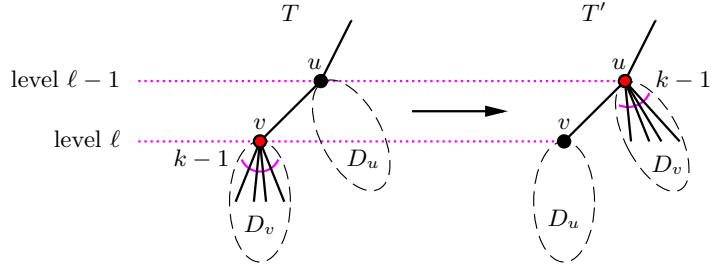

\centering
\begin{pgfpicture}{-14.98mm}{30.00mm}{74.42mm}{68.51mm}
\pgfsetxvec{\pgfpoint{0.80mm}{0mm}}
\pgfsetyvec{\pgfpoint{0mm}{0.80mm}}
\color[rgb]{0,0,0}\pgfsetlinewidth{0.30mm}\pgfsetdash{}{0mm}
\color[rgb]{1,0,1}\pgfmoveto{\pgfxy(79.00,66.00)}\pgfcurveto{\pgfxy(79.96,65.59)}{\pgfxy(81.04,65.59)}{\pgfxy(82.00,66.00)}\pgfcurveto{\pgfxy(82.90,66.38)}{\pgfxy(83.62,67.10)}{\pgfxy(84.00,68.00)}\pgfstroke
\pgfmoveto{\pgfxy(17.00,57.00)}\pgfcurveto{\pgfxy(17.50,55.79)}{\pgfxy(18.69,55.00)}{\pgfxy(20.00,55.00)}\pgfcurveto{\pgfxy(21.31,55.00)}{\pgfxy(22.50,55.79)}{\pgfxy(23.00,57.00)}\pgfstroke
\pgfsetdash{{0.30mm}{0.50mm}}{0mm}\pgfmoveto{\pgfxy(0.00,60.00)}\pgflineto{\pgfxy(69.00,60.00)}\pgfstroke
\pgfmoveto{\pgfxy(0.00,70.00)}\pgflineto{\pgfxy(79.00,70.00)}\pgfstroke
\color[rgb]{0,0,0}\pgfsetdash{}{0mm}\pgfmoveto{\pgfxy(35.00,80.00)}\pgflineto{\pgfxy(35.00,80.00)}\pgfstroke
\pgfmoveto{\pgfxy(35.00,80.00)}\pgflineto{\pgfxy(30.00,70.00)}\pgfstroke
\pgfmoveto{\pgfxy(30.00,70.00)}\pgflineto{\pgfxy(20.00,60.00)}\pgfstroke
\pgfmoveto{\pgfxy(20.00,60.00)}\pgflineto{\pgfxy(16.00,50.00)}\pgfstroke
\pgfmoveto{\pgfxy(20.00,60.00)}\pgflineto{\pgfxy(19.00,50.00)}\pgfstroke
\pgfmoveto{\pgfxy(20.00,60.00)}\pgflineto{\pgfxy(24.00,50.00)}\pgfstroke
\pgfsetdash{{2.00mm}{1.00mm}}{0mm}\pgfsetlinewidth{0.15mm}\pgfellipse[stroke]{\pgfxy(20.00,50.00)}{\pgfxy(5.00,0.00)}{\pgfxy(0.00,10.00)}
\pgfellipse[stroke]{\pgfxy(35.00,61.00)}{\pgfxy(-4.44,-2.29)}{\pgfxy(4.58,-8.89)}
\pgfcircle[fill]{\pgfxy(30.00,70.00)}{0.80mm}
\pgfsetdash{}{0mm}\pgfsetlinewidth{0.30mm}\pgfcircle[stroke]{\pgfxy(30.00,70.00)}{0.80mm}
\pgfputat{\pgfxy(20.00,62.00)}{\pgfbox[bottom,left]{\fontsize{9.10}{10.93}\selectfont \makebox[0pt]{$v$}}}
\pgfputat{\pgfxy(29.00,72.00)}{\pgfbox[bottom,left]{\fontsize{9.10}{10.93}\selectfont \makebox[0pt]{$u$}}}
\pgfputat{\pgfxy(25.00,80.00)}{\pgfbox[bottom,left]{\fontsize{9.10}{10.93}\selectfont \makebox[0pt]{$T$}}}
\pgfputat{\pgfxy(20.00,45.00)}{\pgfbox[bottom,left]{\fontsize{9.10}{10.93}\selectfont \makebox[0pt]{$D_v$}}}
\pgfputat{\pgfxy(37.00,56.00)}{\pgfbox[bottom,left]{\fontsize{9.10}{10.93}\selectfont \makebox[0pt]{$D_u$}}}
\pgfmoveto{\pgfxy(85.00,80.00)}\pgflineto{\pgfxy(85.00,80.00)}\pgfstroke
\pgfmoveto{\pgfxy(85.00,80.00)}\pgflineto{\pgfxy(80.00,70.00)}\pgfstroke
\pgfmoveto{\pgfxy(80.00,70.00)}\pgflineto{\pgfxy(70.00,60.00)}\pgfstroke
\pgfmoveto{\pgfxy(80.00,70.00)}\pgflineto{\pgfxy(81.00,60.00)}\pgfstroke
\pgfmoveto{\pgfxy(80.00,70.00)}\pgflineto{\pgfxy(86.00,60.00)}\pgfstroke
\pgfmoveto{\pgfxy(80.00,70.00)}\pgflineto{\pgfxy(89.00,60.00)}\pgfstroke
\pgfsetdash{{2.00mm}{1.00mm}}{0mm}\pgfsetlinewidth{0.15mm}\pgfellipse[stroke]{\pgfxy(70.00,50.00)}{\pgfxy(5.00,0.00)}{\pgfxy(0.00,10.00)}
\pgfellipse[stroke]{\pgfxy(84.49,60.80)}{\pgfxy(-3.96,-2.04)}{\pgfxy(4.56,-8.84)}
\pgfcircle[fill]{\pgfxy(70.00,60.00)}{0.80mm}
\pgfsetdash{}{0mm}\pgfsetlinewidth{0.30mm}\pgfcircle[stroke]{\pgfxy(70.00,60.00)}{0.80mm}
\pgfputat{\pgfxy(70.00,62.00)}{\pgfbox[bottom,left]{\fontsize{9.10}{10.93}\selectfont \makebox[0pt]{$v$}}}
\pgfputat{\pgfxy(79.00,72.00)}{\pgfbox[bottom,left]{\fontsize{9.10}{10.93}\selectfont \makebox[0pt]{$u$}}}
\pgfputat{\pgfxy(75.00,80.00)}{\pgfbox[bottom,left]{\fontsize{9.10}{10.93}\selectfont \makebox[0pt]{$T'$}}}
\pgfputat{\pgfxy(87.00,55.00)}{\pgfbox[bottom,left]{\fontsize{9.10}{10.93}\selectfont \makebox[0pt]{$D_v$}}}
\pgfputat{\pgfxy(70.00,46.00)}{\pgfbox[bottom,left]{\fontsize{9.10}{10.93}\selectfont \makebox[0pt]{$D_u$}}}
\pgfmoveto{\pgfxy(45.00,65.00)}\pgflineto{\pgfxy(60.00,65.00)}\pgfstroke
\pgfmoveto{\pgfxy(60.00,65.00)}\pgflineto{\pgfxy(57.20,65.70)}\pgflineto{\pgfxy(57.20,64.30)}\pgflineto{\pgfxy(60.00,65.00)}\pgfclosepath\pgffill
\pgfmoveto{\pgfxy(60.00,65.00)}\pgflineto{\pgfxy(57.20,65.70)}\pgflineto{\pgfxy(57.20,64.30)}\pgflineto{\pgfxy(60.00,65.00)}\pgfclosepath\pgfstroke
\pgfputat{\pgfxy(-3.00,69.00)}{\pgfbox[bottom,left]{\fontsize{9.10}{10.93}\selectfont \makebox[0pt][r]{level $\ell-1$}}}
\pgfputat{\pgfxy(-3.00,59.00)}{\pgfbox[bottom,left]{\fontsize{9.10}{10.93}\selectfont \makebox[0pt][r]{level $\ell$}}}
\pgfmoveto{\pgfxy(20.00,60.00)}\pgflineto{\pgfxy(21.00,50.00)}\pgfstroke
\pgfmoveto{\pgfxy(80.00,70.00)}\pgflineto{\pgfxy(84.00,60.00)}\pgfstroke
\color[rgb]{1,0,0}\pgfcircle[fill]{\pgfxy(80.00,70.00)}{0.80mm}
\color[rgb]{0,0,0}\pgfcircle[stroke]{\pgfxy(80.00,70.00)}{0.80mm}
\color[rgb]{1,0,0}\pgfcircle[fill]{\pgfxy(20.00,60.00)}{0.80mm}
\color[rgb]{0,0,0}\pgfcircle[stroke]{\pgfxy(20.00,60.00)}{0.80mm}
\pgfputat{\pgfxy(15.00,56.00)}{\pgfbox[bottom,left]{\fontsize{9.10}{10.93}\selectfont \makebox[0pt][r]{$k-1$}}}
\pgfputat{\pgfxy(85.00,69.00)}{\pgfbox[bottom,left]{\fontsize{9.10}{10.93}\selectfont $k-1$}}
\end{pgfpicture}%
\caption{A bijection from $\mathcal{A}$ to $\mathcal{D}$}
\label{fig:1to4}
\end{figure}

\subsection*{(\ref{item:e2}) $\Leftrightarrow$ (\ref{item:e4}).}
A bijection from $\mathcal{B}$ to $\mathcal{D}$ is constructed as follow: given $(T,v) \in \mathcal{B}$,
we perform the following action:
\begin{enumerate}[(a)]
\item if $v$ is the root of $T$,
the pair $(T,v)$ also belongs to $\mathcal{D}$ due to $$\outdeg(T,v) = \deg(T,v) = k.$$
\item if $v$ is not the root of $T$, find the parent vertex $u$ of $v$, the first-child $w$ of $u$ and the edge $e=(u,w)$.
As the Figure~\ref{fig:2to4}, cut and paste the edge $e$ and the subtree $D_w$ consisting of all descendants of $w$ from $u$ to $v$ such that the vertex $w$ is the first-child of $v$.
We obtain the tree $T'$ and the vertex $v$ in $T'$ satisfies
\begin{align*}
\outdeg(T',v) &= \deg(T,v) = k, & \lev(T',v) &= \lev(T,v) = \ell -1.
\end{align*}
Thus, $(T',v)$ belongs to $\mathcal{D}$.
\end{enumerate}
Since this action is reversible, it is a one-to-one correspondence.

\begin{figure}[t]
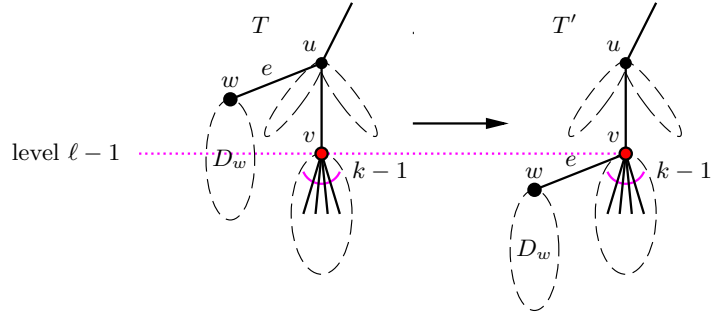

\centering
\begin{pgfpicture}{-18.98mm}{25.20mm}{70.27mm}{70.00mm}
\pgfsetxvec{\pgfpoint{0.80mm}{0mm}}
\pgfsetyvec{\pgfpoint{0mm}{0.80mm}}
\color[rgb]{0,0,0}\pgfsetlinewidth{0.30mm}\pgfsetdash{}{0mm}
\color[rgb]{1,0,1}\pgfmoveto{\pgfxy(72.00,57.00)}\pgfcurveto{\pgfxy(72.50,55.79)}{\pgfxy(73.69,55.00)}{\pgfxy(75.00,55.00)}\pgfcurveto{\pgfxy(76.31,55.00)}{\pgfxy(77.50,55.79)}{\pgfxy(78.00,57.00)}\pgfstroke
\pgfmoveto{\pgfxy(22.00,57.00)}\pgfcurveto{\pgfxy(22.50,55.79)}{\pgfxy(23.69,55.00)}{\pgfxy(25.00,55.00)}\pgfcurveto{\pgfxy(26.31,55.00)}{\pgfxy(27.50,55.79)}{\pgfxy(28.00,57.00)}\pgfstroke
\pgfsetdash{{0.30mm}{0.50mm}}{0mm}\pgfmoveto{\pgfxy(-5.00,60.00)}\pgflineto{\pgfxy(75.00,60.00)}\pgfstroke
\color[rgb]{0,0,0}\pgfsetdash{}{0mm}\pgfmoveto{\pgfxy(40.00,80.00)}\pgflineto{\pgfxy(40.00,80.00)}\pgfstroke
\pgfmoveto{\pgfxy(40.00,65.00)}\pgflineto{\pgfxy(55.00,65.00)}\pgfstroke
\pgfmoveto{\pgfxy(55.00,65.00)}\pgflineto{\pgfxy(52.20,65.70)}\pgflineto{\pgfxy(52.20,64.30)}\pgflineto{\pgfxy(55.00,65.00)}\pgfclosepath\pgffill
\pgfmoveto{\pgfxy(55.00,65.00)}\pgflineto{\pgfxy(52.20,65.70)}\pgflineto{\pgfxy(52.20,64.30)}\pgflineto{\pgfxy(55.00,65.00)}\pgfclosepath\pgfstroke
\pgfmoveto{\pgfxy(30.00,85.00)}\pgflineto{\pgfxy(25.00,75.00)}\pgfstroke
\pgfmoveto{\pgfxy(25.00,75.00)}\pgflineto{\pgfxy(25.00,60.00)}\pgfstroke
\pgfmoveto{\pgfxy(25.00,60.00)}\pgflineto{\pgfxy(22.00,50.00)}\pgfstroke
\pgfmoveto{\pgfxy(25.00,60.00)}\pgflineto{\pgfxy(24.00,50.00)}\pgfstroke
\pgfmoveto{\pgfxy(25.00,60.00)}\pgflineto{\pgfxy(28.00,50.00)}\pgfstroke
\pgfsetdash{{2.00mm}{1.00mm}}{0mm}\pgfsetlinewidth{0.15mm}\pgfellipse[stroke]{\pgfxy(25.00,50.00)}{\pgfxy(5.00,0.00)}{\pgfxy(0.00,10.00)}
\pgfputat{\pgfxy(23.00,62.00)}{\pgfbox[bottom,left]{\fontsize{9.10}{10.93}\selectfont \makebox[0pt]{$v$}}}
\pgfputat{\pgfxy(23.00,77.00)}{\pgfbox[bottom,left]{\fontsize{9.10}{10.93}\selectfont \makebox[0pt]{$u$}}}
\pgfputat{\pgfxy(15.00,80.00)}{\pgfbox[bottom,left]{\fontsize{9.10}{10.93}\selectfont \makebox[0pt]{$T$}}}
\pgfsetdash{}{0mm}\pgfsetlinewidth{0.30mm}\pgfmoveto{\pgfxy(25.00,75.00)}\pgflineto{\pgfxy(10.00,69.00)}\pgfstroke
\pgfsetdash{{2.00mm}{1.00mm}}{0mm}\pgfsetlinewidth{0.15mm}\pgfellipse[stroke]{\pgfxy(10.00,59.00)}{\pgfxy(4.00,0.00)}{\pgfxy(0.00,10.00)}
\pgfellipse[stroke]{\pgfxy(20.39,69.07)}{\pgfxy(1.20,-0.90)}{\pgfxy(4.60,6.17)}
\pgfellipse[stroke]{\pgfxy(29.39,69.07)}{\pgfxy(-1.20,-0.90)}{\pgfxy(4.60,-6.17)}
\pgfcircle[fill]{\pgfxy(25.00,75.00)}{0.80mm}
\pgfputat{\pgfxy(16.00,73.00)}{\pgfbox[bottom,left]{\fontsize{9.10}{10.93}\selectfont \makebox[0pt]{$e$}}}
\pgfputat{\pgfxy(10.00,58.00)}{\pgfbox[bottom,left]{\fontsize{9.10}{10.93}\selectfont \makebox[0pt]{$D_w$}}}
\pgfsetdash{}{0mm}\pgfsetlinewidth{0.30mm}\pgfmoveto{\pgfxy(80.00,85.00)}\pgflineto{\pgfxy(75.00,75.00)}\pgfstroke
\pgfmoveto{\pgfxy(75.00,75.00)}\pgflineto{\pgfxy(75.00,60.00)}\pgfstroke
\pgfmoveto{\pgfxy(75.00,60.00)}\pgflineto{\pgfxy(72.00,50.00)}\pgfstroke
\pgfmoveto{\pgfxy(75.00,60.00)}\pgflineto{\pgfxy(74.00,50.00)}\pgfstroke
\pgfmoveto{\pgfxy(75.00,60.00)}\pgflineto{\pgfxy(78.00,50.00)}\pgfstroke
\pgfsetdash{{2.00mm}{1.00mm}}{0mm}\pgfsetlinewidth{0.15mm}\pgfellipse[stroke]{\pgfxy(75.00,50.00)}{\pgfxy(5.00,0.00)}{\pgfxy(0.00,10.00)}
\pgfputat{\pgfxy(73.00,62.00)}{\pgfbox[bottom,left]{\fontsize{9.10}{10.93}\selectfont \makebox[0pt]{$v$}}}
\pgfputat{\pgfxy(73.00,77.00)}{\pgfbox[bottom,left]{\fontsize{9.10}{10.93}\selectfont \makebox[0pt]{$u$}}}
\pgfputat{\pgfxy(65.00,80.00)}{\pgfbox[bottom,left]{\fontsize{9.10}{10.93}\selectfont \makebox[0pt]{$T'$}}}
\pgfellipse[stroke]{\pgfxy(70.39,69.07)}{\pgfxy(1.20,-0.90)}{\pgfxy(4.60,6.17)}
\pgfellipse[stroke]{\pgfxy(79.39,69.07)}{\pgfxy(-1.20,-0.90)}{\pgfxy(4.60,-6.17)}
\pgfcircle[fill]{\pgfxy(75.00,75.00)}{0.80mm}
\pgfsetdash{}{0mm}\pgfsetlinewidth{0.30mm}\pgfmoveto{\pgfxy(75.00,60.00)}\pgflineto{\pgfxy(60.00,54.00)}\pgfstroke
\pgfputat{\pgfxy(66.00,58.00)}{\pgfbox[bottom,left]{\fontsize{9.10}{10.93}\selectfont \makebox[0pt]{$e$}}}
\pgfsetdash{{2.00mm}{1.00mm}}{0mm}\pgfsetlinewidth{0.15mm}\pgfellipse[stroke]{\pgfxy(60.00,44.00)}{\pgfxy(4.00,0.00)}{\pgfxy(0.00,10.00)}
\pgfputat{\pgfxy(60.00,43.00)}{\pgfbox[bottom,left]{\fontsize{9.10}{10.93}\selectfont \makebox[0pt]{$D_w$}}}
\pgfputat{\pgfxy(-8.00,59.00)}{\pgfbox[bottom,left]{\fontsize{9.10}{10.93}\selectfont \makebox[0pt][r]{level $\ell-1$}}}
\pgfputat{\pgfxy(30.00,56.00)}{\pgfbox[bottom,left]{\fontsize{9.10}{10.93}\selectfont $k-1$}}
\pgfputat{\pgfxy(80.00,56.00)}{\pgfbox[bottom,left]{\fontsize{9.10}{10.93}\selectfont $k-1$}}
\pgfsetdash{}{0mm}\pgfsetlinewidth{0.30mm}\pgfmoveto{\pgfxy(25.00,60.00)}\pgflineto{\pgfxy(26.00,50.00)}\pgfstroke
\pgfmoveto{\pgfxy(75.00,60.00)}\pgflineto{\pgfxy(76.00,50.00)}\pgfstroke
\color[rgb]{1,0,0}\pgfcircle[fill]{\pgfxy(75.00,60.00)}{0.80mm}
\color[rgb]{0,0,0}\pgfcircle[stroke]{\pgfxy(75.00,60.00)}{0.80mm}
\color[rgb]{1,0,0}\pgfcircle[fill]{\pgfxy(25.00,60.00)}{0.80mm}
\color[rgb]{0,0,0}\pgfcircle[stroke]{\pgfxy(25.00,60.00)}{0.80mm}
\pgfcircle[fill]{\pgfxy(10.00,69.00)}{0.80mm}
\pgfcircle[stroke]{\pgfxy(10.00,69.00)}{0.80mm}
\pgfcircle[fill]{\pgfxy(60.00,54.00)}{0.80mm}
\pgfcircle[stroke]{\pgfxy(60.00,54.00)}{0.80mm}
\pgfputat{\pgfxy(60.00,56.00)}{\pgfbox[bottom,left]{\fontsize{9.10}{10.93}\selectfont \makebox[0pt]{$w$}}}
\pgfputat{\pgfxy(10.00,71.00)}{\pgfbox[bottom,left]{\fontsize{9.10}{10.93}\selectfont \makebox[0pt]{$w$}}}
\end{pgfpicture}%
\caption{A bijection from $\mathcal{B}$ to $\mathcal{D}$ if $v$ is not the root of $T$}
\label{fig:2to4}
\end{figure}

\subsection*{(\ref{item:e3}) $\Leftrightarrow$ (\ref{item:e4}).}
A bijection from $\mathcal{C}$ to $\mathcal{D}$ is constructed as follows:
given $(T,v) \in \mathcal{C}$, find the parent vertex $u$ of $v$.
As the Figure~\ref{fig:3to4}, consider the subtree $D_{uv}$ of $u$ on the right of the edge $(u,v)$.
By cutting and pasting the subtrees $D_{uv}$ from $u$ to $v$ we get the new tree $T'$ with the vertex $u$ such that
\begin{align*}
\outdeg(T',u) &= \eld(T,v) + 1 = k, & \lev(T',u) &= \lev(T,v) -1 = \ell -1,
\end{align*}
where $\eld(T,v)$ means the number of elder siblings of a vertex $v$ in a tree $T$.
Thus, $(T',u)$ belongs to $\mathcal{D}$. Since this interchanging action is reversible, it is a one-to-one correspondence.

\begin{figure}[t]
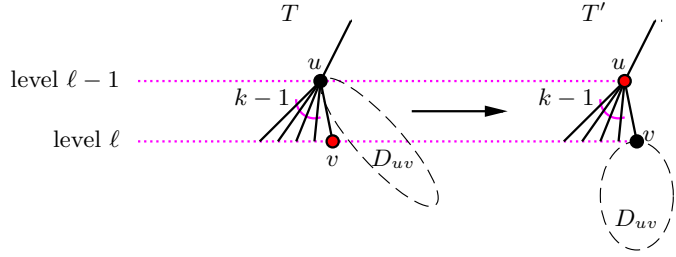

\centering
\begin{pgfpicture}{-14.98mm}{31.60mm}{72.40mm}{68.51mm}
\pgfsetxvec{\pgfpoint{0.80mm}{0mm}}
\pgfsetyvec{\pgfpoint{0mm}{0.80mm}}
\color[rgb]{0,0,0}\pgfsetlinewidth{0.30mm}\pgfsetdash{}{0mm}
\color[rgb]{1,0,1}\pgfmoveto{\pgfxy(76.00,67.00)}\pgfcurveto{\pgfxy(75.94,65.67)}{\pgfxy(76.75,64.46)}{\pgfxy(78.00,64.00)}\pgfcurveto{\pgfxy(78.65,63.76)}{\pgfxy(79.35,63.76)}{\pgfxy(80.00,64.00)}\pgfstroke
\pgfmoveto{\pgfxy(26.00,67.00)}\pgfcurveto{\pgfxy(25.94,65.67)}{\pgfxy(26.75,64.46)}{\pgfxy(28.00,64.00)}\pgfcurveto{\pgfxy(28.65,63.76)}{\pgfxy(29.35,63.76)}{\pgfxy(30.00,64.00)}\pgfstroke
\pgfsetdash{{0.30mm}{0.50mm}}{0mm}\pgfmoveto{\pgfxy(0.00,60.00)}\pgflineto{\pgfxy(82.00,60.00)}\pgfstroke
\pgfmoveto{\pgfxy(0.00,70.00)}\pgflineto{\pgfxy(80.00,70.00)}\pgfstroke
\color[rgb]{0,0,0}\pgfsetdash{}{0mm}\pgfmoveto{\pgfxy(35.00,80.00)}\pgflineto{\pgfxy(35.00,80.00)}\pgfstroke
\pgfmoveto{\pgfxy(35.00,80.00)}\pgflineto{\pgfxy(30.00,70.00)}\pgfstroke
\pgfmoveto{\pgfxy(30.00,70.00)}\pgflineto{\pgfxy(20.00,60.00)}\pgfstroke
\pgfmoveto{\pgfxy(30.00,70.00)}\pgflineto{\pgfxy(32.00,60.00)}\pgfstroke
\pgfmoveto{\pgfxy(30.00,70.00)}\pgflineto{\pgfxy(26.00,60.00)}\pgfstroke
\pgfmoveto{\pgfxy(30.00,70.00)}\pgflineto{\pgfxy(29.00,60.00)}\pgfstroke
\pgfmoveto{\pgfxy(30.00,70.00)}\pgflineto{\pgfxy(23.00,60.00)}\pgfstroke
\pgfsetdash{{2.00mm}{1.00mm}}{0mm}\pgfsetlinewidth{0.15mm}\pgfellipse[stroke]{\pgfxy(39.67,59.86)}{\pgfxy(-2.74,-2.46)}{\pgfxy(9.28,-10.35)}
\pgfcircle[fill]{\pgfxy(30.00,70.00)}{0.80mm}
\pgfsetdash{}{0mm}\pgfsetlinewidth{0.30mm}\pgfcircle[stroke]{\pgfxy(30.00,70.00)}{0.80mm}
\pgfputat{\pgfxy(32.00,56.00)}{\pgfbox[bottom,left]{\fontsize{9.10}{10.93}\selectfont \makebox[0pt]{$v$}}}
\pgfputat{\pgfxy(29.00,72.00)}{\pgfbox[bottom,left]{\fontsize{9.10}{10.93}\selectfont \makebox[0pt]{$u$}}}
\pgfputat{\pgfxy(25.00,80.00)}{\pgfbox[bottom,left]{\fontsize{9.10}{10.93}\selectfont \makebox[0pt]{$T$}}}
\pgfputat{\pgfxy(42.00,56.00)}{\pgfbox[bottom,left]{\fontsize{9.10}{10.93}\selectfont \makebox[0pt]{$D_{uv}$}}}
\pgfmoveto{\pgfxy(86.00,80.00)}\pgflineto{\pgfxy(86.00,80.00)}\pgfstroke
\pgfputat{\pgfxy(75.00,80.00)}{\pgfbox[bottom,left]{\fontsize{9.10}{10.93}\selectfont \makebox[0pt]{$T'$}}}
\pgfmoveto{\pgfxy(45.00,65.00)}\pgflineto{\pgfxy(60.00,65.00)}\pgfstroke
\pgfmoveto{\pgfxy(60.00,65.00)}\pgflineto{\pgfxy(57.20,65.70)}\pgflineto{\pgfxy(57.20,64.30)}\pgflineto{\pgfxy(60.00,65.00)}\pgfclosepath\pgffill
\pgfmoveto{\pgfxy(60.00,65.00)}\pgflineto{\pgfxy(57.20,65.70)}\pgflineto{\pgfxy(57.20,64.30)}\pgflineto{\pgfxy(60.00,65.00)}\pgfclosepath\pgfstroke
\pgfputat{\pgfxy(-3.00,69.00)}{\pgfbox[bottom,left]{\fontsize{9.10}{10.93}\selectfont \makebox[0pt][r]{level $\ell-1$}}}
\pgfputat{\pgfxy(-3.00,59.00)}{\pgfbox[bottom,left]{\fontsize{9.10}{10.93}\selectfont \makebox[0pt][r]{level $\ell$}}}
\color[rgb]{1,0,0}\pgfcircle[fill]{\pgfxy(32.00,60.00)}{0.80mm}
\color[rgb]{0,0,0}\pgfcircle[stroke]{\pgfxy(32.00,60.00)}{0.80mm}
\pgfmoveto{\pgfxy(85.00,80.00)}\pgflineto{\pgfxy(85.00,80.00)}\pgfstroke
\pgfmoveto{\pgfxy(85.00,80.00)}\pgflineto{\pgfxy(80.00,70.00)}\pgfstroke
\pgfmoveto{\pgfxy(80.00,70.00)}\pgflineto{\pgfxy(70.00,60.00)}\pgfstroke
\pgfmoveto{\pgfxy(80.00,70.00)}\pgflineto{\pgfxy(82.00,60.00)}\pgfstroke
\pgfmoveto{\pgfxy(80.00,70.00)}\pgflineto{\pgfxy(76.00,60.00)}\pgfstroke
\pgfmoveto{\pgfxy(80.00,70.00)}\pgflineto{\pgfxy(79.00,60.00)}\pgfstroke
\pgfmoveto{\pgfxy(80.00,70.00)}\pgflineto{\pgfxy(73.00,60.00)}\pgfstroke
\pgfputat{\pgfxy(84.00,60.00)}{\pgfbox[bottom,left]{\fontsize{9.10}{10.93}\selectfont \makebox[0pt]{$v$}}}
\pgfputat{\pgfxy(79.00,72.00)}{\pgfbox[bottom,left]{\fontsize{9.10}{10.93}\selectfont \makebox[0pt]{$u$}}}
\pgfsetdash{{2.00mm}{1.00mm}}{0mm}\pgfsetlinewidth{0.15mm}\pgfellipse[stroke]{\pgfxy(82.00,51.00)}{\pgfxy(6.00,0.00)}{\pgfxy(0.00,9.00)}
\color[rgb]{1,0,0}\pgfcircle[fill]{\pgfxy(80.00,70.00)}{0.80mm}
\color[rgb]{0,0,0}\pgfsetdash{}{0mm}\pgfsetlinewidth{0.30mm}\pgfcircle[stroke]{\pgfxy(80.00,70.00)}{0.80mm}
\pgfcircle[fill]{\pgfxy(82.00,60.00)}{0.80mm}
\pgfcircle[stroke]{\pgfxy(82.00,60.00)}{0.80mm}
\pgfputat{\pgfxy(25.00,66.00)}{\pgfbox[bottom,left]{\fontsize{9.10}{10.93}\selectfont \makebox[0pt][r]{$k-1$}}}
\pgfputat{\pgfxy(75.00,66.00)}{\pgfbox[bottom,left]{\fontsize{9.10}{10.93}\selectfont \makebox[0pt][r]{$k-1$}}}
\pgfputat{\pgfxy(82.00,46.00)}{\pgfbox[bottom,left]{\fontsize{9.10}{10.93}\selectfont \makebox[0pt]{$D_{uv}$}}}
\end{pgfpicture}%
\caption{A bijection from $\mathcal{C}$ to $\mathcal{D}$}
\label{fig:3to4}
\end{figure}

\section{Proof of Theorem~\ref{thm:main}: Enumeration}
\label{sec:enumeration}

By putting $\ell+1$ in place of $\ell$ in \eqref{item:c4} of Theorem~\ref{thm:main},
it suffices to show that
for any two nonnegative integers $k$ and $\ell$,
the number of vertices of outdegree $k$ at level $\ell$ among trees in $\T_n$ is
$$\frac{k+2\ell}{2n-k} \binom{2n -k }{ n +\ell}.$$
The following lemma gives a cumulative counting in $k$ and $\ell$.

\begin{lem}[Main lemma]
Given $n \ge 1$,
for any two nonnegative integers $k$ and $\ell$,
the number of vertices of outdegree at least $k$ and at level at least $\ell$ among trees in $\T_n$ is
\begin{align}
\label{eq:lem}
\binom{2n -k }{ n+\ell}.
\end{align}

\end{lem}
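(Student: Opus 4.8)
The plan is to prove the Main Lemma by exhibiting a bijection between the pairs $(T,v)$ in $\T_n$ with $\outdeg(T,v)\ge k$ and $\lev(T,v)\ge \ell$, and a family of lattice paths that is manifestly counted by $\binom{2n-k}{n+\ell}$. Since $\binom{2n-k}{n+\ell}$ is the number of free $\pm1$ lattice paths of length $2n-k$ with $n+\ell$ up-steps and $n-k-\ell$ down-steps, i.e.\ unconstrained paths from the origin ending at height $2\ell+k$, the target set is exactly this set of paths. This also matches the support of the formula: such a pair can exist only when $k+\ell\le n$, which is precisely when the binomial coefficient is nonzero.

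First I would fix the depth-first encoding of a tree $T\in\T_n$: traversing $T$ and recording an up-step $U$ at each descent to a child and a down-step $D$ at each ascent produces a Dyck path $w(T)$ of length $2n$. In this encoding each non-root vertex corresponds to its entering $U$-step, and the height immediately after that step equals the level of the vertex; moreover the outdegree of a vertex equals the number of $U$-steps leaving its own height inside the span delimited by its entering step and its matching closing $D$-step. These two readings let me locate inside $w(T)$ the data attached to the marked vertex $v$: the closing $D$-steps of the $\lambda=\lev(T,v)$ non-root vertices on the path from the root to $v$ (and $\lambda\ge\ell$), together with the $d=\outdeg(T,v)\ge k$ closing $D$-steps of the children of $v$.

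The bijection itself ``opens'' $w(T)$ at $v$ using exactly these steps. To record that $v$ lies at level at least $\ell$, I flip from $D$ to $U$ the closing steps of $\ell$ of the spine vertices (possible since $\lambda\ge\ell$); each flip preserves the length and raises the terminal height by $2$. To record that $v$ has outdegree at least $k$, I delete the closing steps of $k$ of the children of $v$ (possible since $d\ge k$); each deletion shortens the path by one and raises the terminal height by $1$. Starting from the balanced word with $n$ up-steps and $n$ down-steps, the net effect is a word with $n+\ell$ up-steps and $n-k-\ell$ down-steps, hence an unconstrained path of length $2n-k$ ending at height $2\ell+k$, as required.

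The hard part will be proving that this ``flip-and-delete'' map is a bijection, i.e.\ that the modified steps can be recovered canonically from the image path. The positive terminal height $2\ell+k$ signals that flips and deletions have occurred, but the spine-flips (nested along the root-to-$v$ path) and the child-deletions (clustered at $v$) interact, so I must fix both a canonical order and a canonical choice --- for instance, always undoing the deletions before the flips and always acting on the steps nearest to $v$ --- and then show, via a first-passage decomposition of the image path, that this choice is forced. Concretely, I expect that scanning for the appropriate extremal passages pinpoints exactly which $\ell$ up-steps must be reverted to restore the Dyck condition along the spine, and where the $k$ down-steps must be reinserted to re-close the children of $v$; establishing that this inverse is well defined and two-sided is the crux. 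As consistency checks I would verify the boundary cases $k=\ell=0$ (all vertices, giving $\binom{2n}{n}$) and $(k,\ell)=(1,0),(0,1)$ against the known counts, and confirm that finite-differencing the formula reproduces the exact count $\tfrac{k+2\ell}{2n-k}\binom{2n-k}{n+\ell}$ needed in Section~\ref{sec:enumeration}. Should the reversibility bookkeeping prove too delicate, the fallback is a cycle-lemma count on the ordered forest obtained by removing the spine and the star at $v$, which yields the ballot number $\tfrac{2\lambda+d}{2n-d}\binom{2n-d}{n+\lambda}$ for each fixed $(\lambda,d)$ and then collapses to $\binom{2n-k}{n+\ell}$.
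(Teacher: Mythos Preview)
Your flip-and-delete map has a structural obstruction that no choice of ``canonical'' steps can repair. Both operations you perform on $w(T)$---turning a $D$ into a $U$, and deleting a $D$---preserve nonnegativity: if the input path never dips below the axis, neither does the output. Since $w(T)$ is a Dyck path, every image therefore stays weakly above the axis. But your target is the set of \emph{all} free paths of length $2n-k$ ending at height $2\ell+k$, and as soon as $n-k-\ell\ge 1$ this set contains paths that dip below zero (for instance the path beginning with a $D$). So the map can never be surjective. It is not injective either: already for $k=\ell=0$ your map degenerates to $(T,v)\mapsto w(T)$ and forgets $v$ entirely. For a concrete failure with $\ell>0$, take $n=2$, $k=0$, $\ell=1$: there are four pairs $(T,v)$ with $\lev(v)\ge 1$, but only three nonnegative paths of length $4$ ending at height~$2$, so no assignment of which spine step to flip can produce a bijection; the free path $DUUU$ is simply unreachable by your operations.

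The paper's bijection avoids this by encoding the full \emph{position} of $v$, not merely the thresholds $k$ and $\ell$. One decomposes $T$ along the spine from the root to $v$ into the subtree $D_v$ below $v$, the subtrees $R_1,\dots,R_{\ell'}$ hanging to the right of the spine (where $\ell'=\lev(v)$), and the remaining tree $L$; one then concatenates $\psi(D_v)\searrow\varphi(R_1)\searrow\cdots\searrow\varphi(R_{\ell'})\searrow\varphi(L)$, using the outdegree-based Dyck encoding $\psi$ on $D_v$ (so the path begins with $\outdeg(v)\ge k$ forced up-steps) and the standard preorder encoding $\varphi$ elsewhere. The ``level $\ge\ell$'' condition is then captured by a genuine reflection of this concatenated path about the line $y=-\ell-1$ at its first crossing; it is this reflection that lets the image go below the axis. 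Finally one strips the first $k$ and the last $\ell'+1$ steps to land in $\mathcal{L}$. Your fallback of summing the ballot numbers $\frac{2\lambda+d}{2n-d}\binom{2n-d}{n+\lambda}$ over $\lambda\ge\ell$ and $d\ge k$ is viable, but it is a double telescoping computation rather than the single bijection the paper's construction achieves.
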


\begin{proof}
Let $\mathcal{V}$ be the set of $(T,v) \in \TV_n$ such that $v$ is a vertex of outdegree at least $k$ ant at level at least $\ell$ in $T$. Let $\mathcal{L}$ be the set of \emph{lattice paths} of length $(2n-k)$ from $(k, k)$ to $(2n, -2\ell)$, consisting of $(n-k-\ell)$ up-steps along the vector $(1,1)$ and $(n+\ell)$ down-steps along the vector $(1,-1)$.
Since
$$\# \mathcal{L} = \binom{2n - k}{n -k -\ell,~ n+\ell} = \binom{2n -k }{ n+\ell },$$
it is enough to construct a bijection $\Phi$ between $\mathcal{V}$ and $\mathcal{L}$.

Before constructing the bijection we first introduce two well-known bijections $\varphi$ and $\psi$ between rooted ordered trees and Dyck paths.

The bijection $\varphi$ corresponds a tree to a Dyck path by recording the steps when the tree is traversed in preorder. Here we record an up-step when we go down an edge and a down-step when going up.
An example of the bijection $\varphi$ is shown in the Figure~\ref{fig:varphi}.
\begin{figure}[t]
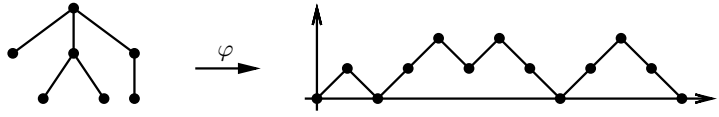

\centering
\begin{pgfpicture}{9.44mm}{48.40mm}{106.00mm}{66.56mm}
\pgfsetxvec{\pgfpoint{0.80mm}{0mm}}
\pgfsetyvec{\pgfpoint{0mm}{0.80mm}}
\color[rgb]{0,0,0}\pgfsetlinewidth{0.30mm}\pgfsetdash{}{0mm}
\pgfmoveto{\pgfxy(45.00,70.00)}\pgflineto{\pgfxy(55.00,70.00)}\pgfstroke
\pgfmoveto{\pgfxy(55.00,70.00)}\pgflineto{\pgfxy(52.20,70.70)}\pgflineto{\pgfxy(52.20,69.30)}\pgflineto{\pgfxy(55.00,70.00)}\pgfclosepath\pgffill
\pgfmoveto{\pgfxy(55.00,70.00)}\pgflineto{\pgfxy(52.20,70.70)}\pgflineto{\pgfxy(52.20,69.30)}\pgflineto{\pgfxy(55.00,70.00)}\pgfclosepath\pgfstroke
\pgfmoveto{\pgfxy(65.00,63.00)}\pgflineto{\pgfxy(65.00,80.00)}\pgfstroke
\pgfmoveto{\pgfxy(65.00,80.00)}\pgflineto{\pgfxy(64.30,77.20)}\pgflineto{\pgfxy(65.00,80.00)}\pgflineto{\pgfxy(65.70,77.20)}\pgflineto{\pgfxy(65.00,80.00)}\pgfclosepath\pgffill
\pgfmoveto{\pgfxy(65.00,80.00)}\pgflineto{\pgfxy(64.30,77.20)}\pgflineto{\pgfxy(65.00,80.00)}\pgflineto{\pgfxy(65.70,77.20)}\pgflineto{\pgfxy(65.00,80.00)}\pgfclosepath\pgfstroke
\pgfmoveto{\pgfxy(63.00,65.00)}\pgflineto{\pgfxy(130.00,65.00)}\pgfstroke
\pgfmoveto{\pgfxy(130.00,65.00)}\pgflineto{\pgfxy(127.20,65.70)}\pgflineto{\pgfxy(129.30,65.00)}\pgflineto{\pgfxy(127.20,64.30)}\pgflineto{\pgfxy(130.00,65.00)}\pgfclosepath\pgffill
\pgfmoveto{\pgfxy(130.00,65.00)}\pgflineto{\pgfxy(127.20,65.70)}\pgflineto{\pgfxy(129.30,65.00)}\pgflineto{\pgfxy(127.20,64.30)}\pgflineto{\pgfxy(130.00,65.00)}\pgfclosepath\pgfstroke
\pgfmoveto{\pgfxy(65.00,65.00)}\pgflineto{\pgfxy(70.00,70.00)}\pgflineto{\pgfxy(75.00,65.00)}\pgflineto{\pgfxy(80.00,70.00)}\pgflineto{\pgfxy(85.00,75.00)}\pgflineto{\pgfxy(90.00,70.00)}\pgflineto{\pgfxy(95.00,75.00)}\pgflineto{\pgfxy(100.00,70.00)}\pgflineto{\pgfxy(105.00,65.00)}\pgflineto{\pgfxy(110.00,70.00)}\pgflineto{\pgfxy(115.00,75.00)}\pgflineto{\pgfxy(120.00,70.00)}\pgflineto{\pgfxy(125.00,65.00)}\pgfstroke
\pgfputat{\pgfxy(50.00,72.00)}{\pgfbox[bottom,left]{\fontsize{9.10}{10.93}\selectfont \makebox[0pt]{$\varphi$}}}
\pgfmoveto{\pgfxy(25.00,80.00)}\pgflineto{\pgfxy(15.00,72.50)}\pgfstroke
\pgfmoveto{\pgfxy(25.00,80.00)}\pgflineto{\pgfxy(25.00,72.50)}\pgfstroke
\pgfmoveto{\pgfxy(25.00,80.00)}\pgflineto{\pgfxy(35.00,72.50)}\pgfstroke
\pgfmoveto{\pgfxy(25.00,72.50)}\pgflineto{\pgfxy(20.00,65.00)}\pgfstroke
\pgfmoveto{\pgfxy(25.00,72.50)}\pgflineto{\pgfxy(30.00,65.00)}\pgfstroke
\pgfmoveto{\pgfxy(35.00,72.50)}\pgflineto{\pgfxy(35.00,65.00)}\pgfstroke
\pgfcircle[fill]{\pgfxy(30.00,65.00)}{0.56mm}
\pgfcircle[stroke]{\pgfxy(30.00,65.00)}{0.56mm}
\pgfcircle[fill]{\pgfxy(35.00,65.00)}{0.56mm}
\pgfcircle[stroke]{\pgfxy(35.00,65.00)}{0.56mm}
\pgfcircle[fill]{\pgfxy(20.00,65.00)}{0.56mm}
\pgfcircle[stroke]{\pgfxy(20.00,65.00)}{0.56mm}
\pgfcircle[fill]{\pgfxy(25.00,80.00)}{0.56mm}
\pgfcircle[stroke]{\pgfxy(25.00,80.00)}{0.56mm}
\pgfcircle[fill]{\pgfxy(25.00,72.50)}{0.56mm}
\pgfcircle[stroke]{\pgfxy(25.00,72.50)}{0.56mm}
\pgfcircle[fill]{\pgfxy(35.00,72.50)}{0.56mm}
\pgfcircle[stroke]{\pgfxy(35.00,72.50)}{0.56mm}
\pgfcircle[fill]{\pgfxy(15.00,72.50)}{0.56mm}
\pgfcircle[stroke]{\pgfxy(15.00,72.50)}{0.56mm}
\pgfcircle[fill]{\pgfxy(125.00,65.00)}{0.56mm}
\pgfcircle[stroke]{\pgfxy(125.00,65.00)}{0.56mm}
\pgfcircle[fill]{\pgfxy(120.00,70.00)}{0.56mm}
\pgfcircle[stroke]{\pgfxy(120.00,70.00)}{0.56mm}
\pgfcircle[fill]{\pgfxy(115.00,75.00)}{0.56mm}
\pgfcircle[stroke]{\pgfxy(115.00,75.00)}{0.56mm}
\pgfcircle[fill]{\pgfxy(110.00,70.00)}{0.56mm}
\pgfcircle[stroke]{\pgfxy(110.00,70.00)}{0.56mm}
\pgfcircle[fill]{\pgfxy(105.00,65.00)}{0.56mm}
\pgfcircle[stroke]{\pgfxy(105.00,65.00)}{0.56mm}
\pgfcircle[fill]{\pgfxy(100.00,70.00)}{0.56mm}
\pgfcircle[stroke]{\pgfxy(100.00,70.00)}{0.56mm}
\pgfcircle[fill]{\pgfxy(95.00,75.00)}{0.56mm}
\pgfcircle[stroke]{\pgfxy(95.00,75.00)}{0.56mm}
\pgfcircle[fill]{\pgfxy(90.00,70.00)}{0.56mm}
\pgfcircle[stroke]{\pgfxy(90.00,70.00)}{0.56mm}
\pgfcircle[fill]{\pgfxy(85.00,75.00)}{0.56mm}
\pgfcircle[stroke]{\pgfxy(85.00,75.00)}{0.56mm}
\pgfcircle[fill]{\pgfxy(80.00,70.00)}{0.56mm}
\pgfcircle[stroke]{\pgfxy(80.00,70.00)}{0.56mm}
\pgfcircle[fill]{\pgfxy(75.00,65.00)}{0.56mm}
\pgfcircle[stroke]{\pgfxy(75.00,65.00)}{0.56mm}
\pgfcircle[fill]{\pgfxy(70.00,70.00)}{0.56mm}
\pgfcircle[stroke]{\pgfxy(70.00,70.00)}{0.56mm}
\pgfcircle[fill]{\pgfxy(65.00,65.00)}{0.56mm}
\pgfcircle[stroke]{\pgfxy(65.00,65.00)}{0.56mm}
\end{pgfpicture}%
\caption{The bijection $\varphi$}
\label{fig:varphi}
\end{figure}

The bijection $\psi$ corresponds a tree to a Dyck path by recording the steps when the tree is traversed in preorder. Here, whenever we meet a vertex of outdegree $k$, except the last leaf, we record $k$ up-steps and one down-step.
An example of the bijection $\psi$ is shown in the Figure~\ref{fig:psi}.
\begin{figure}[t]
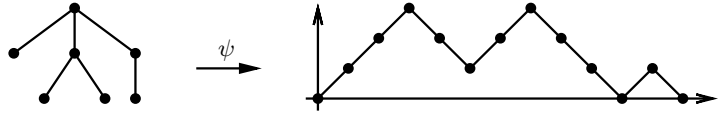

\centering
\begin{pgfpicture}{9.44mm}{48.40mm}{106.00mm}{66.56mm}
\pgfsetxvec{\pgfpoint{0.80mm}{0mm}}
\pgfsetyvec{\pgfpoint{0mm}{0.80mm}}
\color[rgb]{0,0,0}\pgfsetlinewidth{0.30mm}\pgfsetdash{}{0mm}
\pgfmoveto{\pgfxy(45.00,70.00)}\pgflineto{\pgfxy(55.00,70.00)}\pgfstroke
\pgfmoveto{\pgfxy(55.00,70.00)}\pgflineto{\pgfxy(52.20,70.70)}\pgflineto{\pgfxy(52.20,69.30)}\pgflineto{\pgfxy(55.00,70.00)}\pgfclosepath\pgffill
\pgfmoveto{\pgfxy(55.00,70.00)}\pgflineto{\pgfxy(52.20,70.70)}\pgflineto{\pgfxy(52.20,69.30)}\pgflineto{\pgfxy(55.00,70.00)}\pgfclosepath\pgfstroke
\pgfmoveto{\pgfxy(25.00,80.00)}\pgflineto{\pgfxy(15.00,72.50)}\pgfstroke
\pgfmoveto{\pgfxy(25.00,80.00)}\pgflineto{\pgfxy(25.00,72.50)}\pgfstroke
\pgfmoveto{\pgfxy(25.00,80.00)}\pgflineto{\pgfxy(35.00,72.50)}\pgfstroke
\pgfmoveto{\pgfxy(25.00,72.50)}\pgflineto{\pgfxy(20.00,65.00)}\pgfstroke
\pgfmoveto{\pgfxy(25.00,72.50)}\pgflineto{\pgfxy(30.00,65.00)}\pgfstroke
\pgfmoveto{\pgfxy(65.00,63.00)}\pgflineto{\pgfxy(65.00,80.00)}\pgfstroke
\pgfmoveto{\pgfxy(65.00,80.00)}\pgflineto{\pgfxy(64.30,77.20)}\pgflineto{\pgfxy(65.00,80.00)}\pgflineto{\pgfxy(65.70,77.20)}\pgflineto{\pgfxy(65.00,80.00)}\pgfclosepath\pgffill
\pgfmoveto{\pgfxy(65.00,80.00)}\pgflineto{\pgfxy(64.30,77.20)}\pgflineto{\pgfxy(65.00,80.00)}\pgflineto{\pgfxy(65.70,77.20)}\pgflineto{\pgfxy(65.00,80.00)}\pgfclosepath\pgfstroke
\pgfmoveto{\pgfxy(63.00,65.00)}\pgflineto{\pgfxy(130.00,65.00)}\pgfstroke
\pgfmoveto{\pgfxy(130.00,65.00)}\pgflineto{\pgfxy(127.20,65.70)}\pgflineto{\pgfxy(129.30,65.00)}\pgflineto{\pgfxy(127.20,64.30)}\pgflineto{\pgfxy(130.00,65.00)}\pgfclosepath\pgffill
\pgfmoveto{\pgfxy(130.00,65.00)}\pgflineto{\pgfxy(127.20,65.70)}\pgflineto{\pgfxy(129.30,65.00)}\pgflineto{\pgfxy(127.20,64.30)}\pgflineto{\pgfxy(130.00,65.00)}\pgfclosepath\pgfstroke
\pgfputat{\pgfxy(50.00,72.00)}{\pgfbox[bottom,left]{\fontsize{9.10}{10.93}\selectfont \makebox[0pt]{$\psi$}}}
\pgfmoveto{\pgfxy(35.00,72.50)}\pgflineto{\pgfxy(35.00,65.00)}\pgfstroke
\pgfmoveto{\pgfxy(65.00,65.00)}\pgflineto{\pgfxy(70.00,70.00)}\pgflineto{\pgfxy(75.00,75.00)}\pgflineto{\pgfxy(80.00,80.00)}\pgflineto{\pgfxy(85.00,75.00)}\pgflineto{\pgfxy(90.00,70.00)}\pgflineto{\pgfxy(95.00,75.00)}\pgflineto{\pgfxy(100.00,80.00)}\pgflineto{\pgfxy(105.00,75.00)}\pgflineto{\pgfxy(110.00,70.00)}\pgflineto{\pgfxy(115.00,65.00)}\pgflineto{\pgfxy(120.00,70.00)}\pgflineto{\pgfxy(125.00,65.00)}\pgfstroke
\pgfcircle[fill]{\pgfxy(65.00,65.00)}{0.56mm}
\pgfcircle[stroke]{\pgfxy(65.00,65.00)}{0.56mm}
\pgfcircle[fill]{\pgfxy(85.00,75.00)}{0.56mm}
\pgfcircle[stroke]{\pgfxy(85.00,75.00)}{0.56mm}
\pgfcircle[fill]{\pgfxy(80.00,80.00)}{0.56mm}
\pgfcircle[stroke]{\pgfxy(80.00,80.00)}{0.56mm}
\pgfcircle[fill]{\pgfxy(75.00,75.00)}{0.56mm}
\pgfcircle[stroke]{\pgfxy(75.00,75.00)}{0.56mm}
\pgfcircle[fill]{\pgfxy(90.00,70.00)}{0.56mm}
\pgfcircle[stroke]{\pgfxy(90.00,70.00)}{0.56mm}
\pgfcircle[fill]{\pgfxy(70.00,70.00)}{0.56mm}
\pgfcircle[stroke]{\pgfxy(70.00,70.00)}{0.56mm}
\pgfcircle[fill]{\pgfxy(30.00,65.00)}{0.56mm}
\pgfcircle[stroke]{\pgfxy(30.00,65.00)}{0.56mm}
\pgfcircle[fill]{\pgfxy(35.00,65.00)}{0.56mm}
\pgfcircle[stroke]{\pgfxy(35.00,65.00)}{0.56mm}
\pgfcircle[fill]{\pgfxy(20.00,65.00)}{0.56mm}
\pgfcircle[stroke]{\pgfxy(20.00,65.00)}{0.56mm}
\pgfcircle[fill]{\pgfxy(25.00,80.00)}{0.56mm}
\pgfcircle[stroke]{\pgfxy(25.00,80.00)}{0.56mm}
\pgfcircle[fill]{\pgfxy(25.00,72.50)}{0.56mm}
\pgfcircle[stroke]{\pgfxy(25.00,72.50)}{0.56mm}
\pgfcircle[fill]{\pgfxy(35.00,72.50)}{0.56mm}
\pgfcircle[stroke]{\pgfxy(35.00,72.50)}{0.56mm}
\pgfcircle[fill]{\pgfxy(15.00,72.50)}{0.56mm}
\pgfcircle[stroke]{\pgfxy(15.00,72.50)}{0.56mm}
\pgfcircle[fill]{\pgfxy(125.00,65.00)}{0.56mm}
\pgfcircle[stroke]{\pgfxy(125.00,65.00)}{0.56mm}
\pgfcircle[fill]{\pgfxy(120.00,70.00)}{0.56mm}
\pgfcircle[stroke]{\pgfxy(120.00,70.00)}{0.56mm}
\pgfcircle[fill]{\pgfxy(115.00,65.00)}{0.56mm}
\pgfcircle[stroke]{\pgfxy(115.00,65.00)}{0.56mm}
\pgfcircle[fill]{\pgfxy(110.00,70.00)}{0.56mm}
\pgfcircle[stroke]{\pgfxy(110.00,70.00)}{0.56mm}
\pgfcircle[fill]{\pgfxy(105.00,75.00)}{0.56mm}
\pgfcircle[stroke]{\pgfxy(105.00,75.00)}{0.56mm}
\pgfcircle[fill]{\pgfxy(100.00,80.00)}{0.56mm}
\pgfcircle[stroke]{\pgfxy(100.00,80.00)}{0.56mm}
\pgfcircle[fill]{\pgfxy(95.00,75.00)}{0.56mm}
\pgfcircle[stroke]{\pgfxy(95.00,75.00)}{0.56mm}
\end{pgfpicture}%
\caption{The bijection $\psi$}
\label{fig:psi}
\end{figure}

Using two bijections $\varphi$ and $\psi$,
we will construct another bijection $\Phi$ between $\mathcal{V}$ to $\mathcal{L}$:
given $(T,v) \in \mathcal{V}$, 
let $k' (\ge k)$ be the number of children of $v$ in $T$ and 
let $\ell' (\ge \ell)$ be the level of $v$ in $T$.
We decompose the tree $T$ into $\ell'+2$ subtrees:
the subtree $D_v$ consisting of descendants of $v$,
$\ell'$ subtrees $R_1$, $R_2, \dots, R_{\ell'}$ on the right hand side of the path from $v$ to the root $r$ of $T$, and the remaining tree $L$ as illustrated in the Figure~\ref{fig:decomposition}.
\begin{figure}[t]
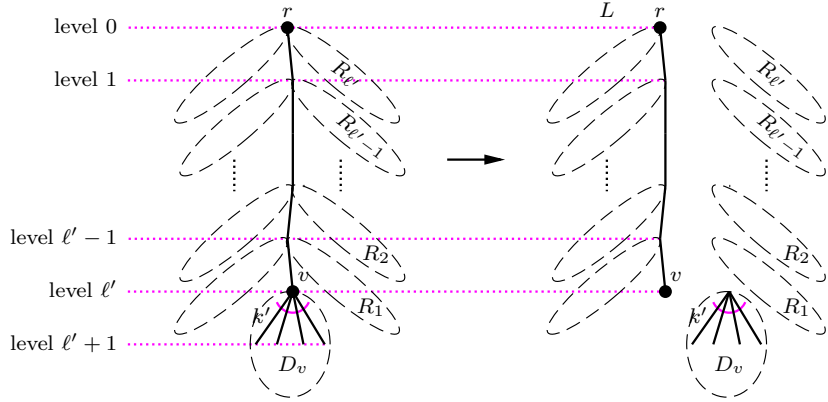

\centering
\begin{pgfpicture}{-21.70mm}{-2.00mm}{86.72mm}{54.60mm}
\pgfsetxvec{\pgfpoint{0.70mm}{0mm}}
\pgfsetyvec{\pgfpoint{0mm}{0.70mm}}
\color[rgb]{0,0,0}\pgfsetlinewidth{0.30mm}\pgfsetdash{}{0mm}
\color[rgb]{1,0,1}\pgfsetdash{{0.30mm}{0.50mm}}{0mm}\pgfmoveto{\pgfxy(90.00,70.00)}\pgflineto{\pgfxy(-10.00,70.00)}\pgfstroke
\pgfmoveto{\pgfxy(91.00,60.00)}\pgflineto{\pgfxy(-10.00,60.00)}\pgfstroke
\pgfmoveto{\pgfxy(90.00,30.00)}\pgflineto{\pgfxy(-10.00,30.00)}\pgfstroke
\pgfmoveto{\pgfxy(91.00,20.00)}\pgflineto{\pgfxy(-10.00,20.00)}\pgfstroke
\pgfmoveto{\pgfxy(27.00,10.00)}\pgflineto{\pgfxy(-10.00,10.00)}\pgfstroke
\color[rgb]{0,0,0}\pgfsetdash{}{0mm}\pgfmoveto{\pgfxy(50.00,45.00)}\pgflineto{\pgfxy(60.00,45.00)}\pgfstroke
\pgfmoveto{\pgfxy(60.00,45.00)}\pgflineto{\pgfxy(57.20,45.70)}\pgflineto{\pgfxy(57.20,44.30)}\pgflineto{\pgfxy(60.00,45.00)}\pgfclosepath\pgffill
\pgfmoveto{\pgfxy(60.00,45.00)}\pgflineto{\pgfxy(57.20,45.70)}\pgflineto{\pgfxy(57.20,44.30)}\pgflineto{\pgfxy(60.00,45.00)}\pgfclosepath\pgfstroke
\pgfmoveto{\pgfxy(23.00,40.00)}\pgflineto{\pgfxy(23.00,40.00)}\pgfstroke
\pgfmoveto{\pgfxy(21.00,40.00)}\pgflineto{\pgfxy(21.00,50.00)}\pgfstroke
\pgfmoveto{\pgfxy(21.00,50.00)}\pgflineto{\pgfxy(21.00,60.00)}\pgfstroke
\pgfmoveto{\pgfxy(21.00,60.00)}\pgflineto{\pgfxy(20.00,70.00)}\pgfstroke
\pgfmoveto{\pgfxy(20.00,30.00)}\pgflineto{\pgfxy(21.00,40.00)}\pgfstroke
\pgfmoveto{\pgfxy(21.00,20.00)}\pgflineto{\pgfxy(20.00,30.00)}\pgfstroke
\pgfsetdash{{2.00mm}{1.00mm}}{0mm}\pgfsetlinewidth{0.15mm}\pgfellipse[stroke]{\pgfxy(30.41,21.08)}{\pgfxy(-1.86,-2.19)}{\pgfxy(10.46,-8.89)}
\pgfellipse[stroke]{\pgfxy(9.41,21.08)}{\pgfxy(1.86,-2.19)}{\pgfxy(10.46,8.89)}
\pgfellipse[stroke]{\pgfxy(31.41,31.08)}{\pgfxy(-1.86,-2.19)}{\pgfxy(10.46,-8.89)}
\pgfellipse[stroke]{\pgfxy(10.41,31.08)}{\pgfxy(1.86,-2.19)}{\pgfxy(10.46,8.89)}
\pgfellipse[stroke]{\pgfxy(31.41,51.08)}{\pgfxy(-1.86,-2.19)}{\pgfxy(10.46,-8.89)}
\pgfellipse[stroke]{\pgfxy(10.41,51.08)}{\pgfxy(1.86,-2.19)}{\pgfxy(10.46,8.89)}
\pgfellipse[stroke]{\pgfxy(30.41,61.08)}{\pgfxy(-1.86,-2.19)}{\pgfxy(10.46,-8.89)}
\pgfellipse[stroke]{\pgfxy(9.41,61.08)}{\pgfxy(1.86,-2.19)}{\pgfxy(10.46,8.89)}
\pgfputat{\pgfxy(23.00,22.00)}{\pgfbox[bottom,left]{\fontsize{7.97}{9.56}\selectfont \makebox[0pt]{$v$}}}
\pgfsetdash{{0.30mm}{0.50mm}}{0mm}\pgfsetlinewidth{0.30mm}\pgfmoveto{\pgfxy(10.00,45.00)}\pgflineto{\pgfxy(10.00,39.00)}\pgfstroke
\pgfmoveto{\pgfxy(30.00,45.00)}\pgflineto{\pgfxy(30.00,39.00)}\pgfstroke
\pgfputat{\pgfxy(27.92,61.99)}{\pgfbox[bottom,left]{\rotatebox{327.23}{\fontsize{7.97}{9.56}\selectfont \smash{\makebox[0pt][l]{$R_{\ell'}$}}}}}
\pgfputat{\pgfxy(28.98,52.22)}{\pgfbox[bottom,left]{\rotatebox{323.35}{\fontsize{7.97}{9.56}\selectfont \smash{\makebox[0pt][l]{$R_{\ell'-1}$}}}}}
\pgfputat{\pgfxy(34.00,26.00)}{\pgfbox[bottom,left]{\fontsize{7.97}{9.56}\selectfont $R_2$}}
\pgfputat{\pgfxy(33.00,16.00)}{\pgfbox[bottom,left]{\fontsize{7.97}{9.56}\selectfont $R_1$}}
\pgfsetdash{}{0mm}\pgfmoveto{\pgfxy(91.00,60.00)}\pgflineto{\pgfxy(90.00,70.00)}\pgfstroke
\pgfmoveto{\pgfxy(90.00,30.00)}\pgflineto{\pgfxy(91.00,40.00)}\pgfstroke
\pgfmoveto{\pgfxy(91.00,20.00)}\pgflineto{\pgfxy(90.00,30.00)}\pgfstroke
\pgfmoveto{\pgfxy(103.00,40.00)}\pgflineto{\pgfxy(103.00,40.00)}\pgfstroke
\pgfmoveto{\pgfxy(91.00,40.00)}\pgflineto{\pgfxy(91.00,50.00)}\pgfstroke
\pgfmoveto{\pgfxy(91.00,50.00)}\pgflineto{\pgfxy(91.00,60.00)}\pgfstroke
\pgfsetdash{{2.00mm}{1.00mm}}{0mm}\pgfsetlinewidth{0.15mm}\pgfellipse[stroke]{\pgfxy(110.41,31.08)}{\pgfxy(-1.86,-2.19)}{\pgfxy(10.46,-8.89)}
\pgfellipse[stroke]{\pgfxy(110.41,51.08)}{\pgfxy(-1.86,-2.19)}{\pgfxy(10.46,-8.89)}
\pgfsetdash{{0.30mm}{0.50mm}}{0mm}\pgfsetlinewidth{0.30mm}\pgfmoveto{\pgfxy(110.00,45.00)}\pgflineto{\pgfxy(110.00,39.00)}\pgfstroke
\pgfputat{\pgfxy(107.92,61.99)}{\pgfbox[bottom,left]{\rotatebox{327.23}{\fontsize{7.97}{9.56}\selectfont \smash{\makebox[0pt][l]{$R_{\ell'}$}}}}}
\pgfputat{\pgfxy(107.98,52.22)}{\pgfbox[bottom,left]{\rotatebox{323.35}{\fontsize{7.97}{9.56}\selectfont \smash{\makebox[0pt][l]{$R_{\ell'-1}$}}}}}
\pgfputat{\pgfxy(113.00,26.00)}{\pgfbox[bottom,left]{\fontsize{7.97}{9.56}\selectfont $R_2$}}
\pgfputat{\pgfxy(113.00,16.00)}{\pgfbox[bottom,left]{\fontsize{7.97}{9.56}\selectfont $R_1$}}
\pgfsetdash{{2.00mm}{1.00mm}}{0mm}\pgfsetlinewidth{0.15mm}\pgfellipse[stroke]{\pgfxy(110.41,61.08)}{\pgfxy(-1.86,-2.19)}{\pgfxy(10.46,-8.89)}
\pgfellipse[stroke]{\pgfxy(110.41,21.08)}{\pgfxy(-1.86,-2.19)}{\pgfxy(10.46,-8.89)}
\pgfellipse[stroke]{\pgfxy(79.41,21.08)}{\pgfxy(1.86,-2.19)}{\pgfxy(10.46,8.89)}
\pgfellipse[stroke]{\pgfxy(80.41,31.08)}{\pgfxy(1.86,-2.19)}{\pgfxy(10.46,8.89)}
\pgfellipse[stroke]{\pgfxy(80.41,51.08)}{\pgfxy(1.86,-2.19)}{\pgfxy(10.46,8.89)}
\pgfellipse[stroke]{\pgfxy(79.41,61.08)}{\pgfxy(1.86,-2.19)}{\pgfxy(10.46,8.89)}
\pgfputat{\pgfxy(93.00,22.00)}{\pgfbox[bottom,left]{\fontsize{7.97}{9.56}\selectfont \makebox[0pt]{$v$}}}
\pgfsetdash{{0.30mm}{0.50mm}}{0mm}\pgfsetlinewidth{0.30mm}\pgfmoveto{\pgfxy(80.00,45.00)}\pgflineto{\pgfxy(80.00,39.00)}\pgfstroke
\pgfputat{\pgfxy(80.00,72.00)}{\pgfbox[bottom,left]{\fontsize{7.97}{9.56}\selectfont \makebox[0pt]{$L$}}}
\pgfputat{\pgfxy(-12.00,9.00)}{\pgfbox[bottom,left]{\fontsize{7.97}{9.56}\selectfont \makebox[0pt][r]{level $\ell'+1$}}}
\pgfputat{\pgfxy(-12.00,19.00)}{\pgfbox[bottom,left]{\fontsize{7.97}{9.56}\selectfont \makebox[0pt][r]{level $\ell'$}}}
\pgfputat{\pgfxy(-12.00,29.00)}{\pgfbox[bottom,left]{\fontsize{7.97}{9.56}\selectfont \makebox[0pt][r]{level $\ell'-1$}}}
\pgfputat{\pgfxy(-12.00,59.00)}{\pgfbox[bottom,left]{\fontsize{7.97}{9.56}\selectfont \makebox[0pt][r]{level $1$}}}
\pgfputat{\pgfxy(-12.00,69.00)}{\pgfbox[bottom,left]{\fontsize{7.97}{9.56}\selectfont \makebox[0pt][r]{level $0$}}}
\pgfcircle[fill]{\pgfxy(20.00,70.00)}{0.70mm}
\pgfsetdash{}{0mm}\pgfcircle[stroke]{\pgfxy(20.00,70.00)}{0.70mm}
\pgfcircle[fill]{\pgfxy(90.00,70.00)}{0.70mm}
\pgfcircle[stroke]{\pgfxy(90.00,70.00)}{0.70mm}
\pgfputat{\pgfxy(20.00,72.00)}{\pgfbox[bottom,left]{\fontsize{7.97}{9.56}\selectfont \makebox[0pt]{$r$}}}
\pgfputat{\pgfxy(90.00,72.00)}{\pgfbox[bottom,left]{\fontsize{7.97}{9.56}\selectfont \makebox[0pt]{$r$}}}
\color[rgb]{1,0,1}\pgfmoveto{\pgfxy(24.00,18.00)}\pgfcurveto{\pgfxy(23.50,16.79)}{\pgfxy(22.31,16.00)}{\pgfxy(21.00,16.00)}\pgfcurveto{\pgfxy(19.69,16.00)}{\pgfxy(18.50,16.79)}{\pgfxy(18.00,18.00)}\pgfstroke
\color[rgb]{0,0,0}\pgfmoveto{\pgfxy(21.00,20.00)}\pgflineto{\pgfxy(18.00,10.00)}\pgfstroke
\pgfmoveto{\pgfxy(21.00,20.00)}\pgflineto{\pgfxy(23.00,10.00)}\pgfstroke
\pgfmoveto{\pgfxy(21.00,20.00)}\pgflineto{\pgfxy(27.00,10.00)}\pgfstroke
\pgfmoveto{\pgfxy(21.00,20.00)}\pgflineto{\pgfxy(14.00,10.00)}\pgfstroke
\pgfputat{\pgfxy(21.00,5.00)}{\pgfbox[bottom,left]{\fontsize{7.97}{9.56}\selectfont \makebox[0pt]{$D_v$}}}
\pgfputat{\pgfxy(17.00,14.00)}{\pgfbox[bottom,left]{\fontsize{7.97}{9.56}\selectfont \makebox[0pt][r]{$k'$}}}
\pgfcircle[fill]{\pgfxy(21.00,20.00)}{0.70mm}
\pgfcircle[stroke]{\pgfxy(21.00,20.00)}{0.70mm}
\pgfsetdash{{2.00mm}{1.00mm}}{0mm}\pgfsetlinewidth{0.15mm}\pgfellipse[stroke]{\pgfxy(20.50,10.00)}{\pgfxy(7.50,0.00)}{\pgfxy(0.00,10.00)}
\pgfcircle[fill]{\pgfxy(91.00,20.00)}{0.70mm}
\pgfsetdash{}{0mm}\pgfsetlinewidth{0.30mm}\pgfcircle[stroke]{\pgfxy(91.00,20.00)}{0.70mm}
\color[rgb]{1,0,1}\pgfmoveto{\pgfxy(106.00,18.00)}\pgfcurveto{\pgfxy(105.50,16.79)}{\pgfxy(104.31,16.00)}{\pgfxy(103.00,16.00)}\pgfcurveto{\pgfxy(101.69,16.00)}{\pgfxy(100.50,16.79)}{\pgfxy(100.00,18.00)}\pgfstroke
\color[rgb]{0,0,0}\pgfmoveto{\pgfxy(103.00,20.00)}\pgflineto{\pgfxy(100.00,10.00)}\pgfstroke
\pgfmoveto{\pgfxy(103.00,20.00)}\pgflineto{\pgfxy(105.00,10.00)}\pgfstroke
\pgfmoveto{\pgfxy(103.00,20.00)}\pgflineto{\pgfxy(109.00,10.00)}\pgfstroke
\pgfmoveto{\pgfxy(103.00,20.00)}\pgflineto{\pgfxy(96.00,10.00)}\pgfstroke
\pgfputat{\pgfxy(103.00,5.00)}{\pgfbox[bottom,left]{\fontsize{7.97}{9.56}\selectfont \makebox[0pt]{$D_v$}}}
\pgfputat{\pgfxy(99.00,14.00)}{\pgfbox[bottom,left]{\fontsize{7.97}{9.56}\selectfont \makebox[0pt][r]{$k'$}}}
\pgfsetdash{{2.00mm}{1.00mm}}{0mm}\pgfsetlinewidth{0.15mm}\pgfellipse[stroke]{\pgfxy(102.50,10.00)}{\pgfxy(7.50,0.00)}{\pgfxy(0.00,10.00)}
\end{pgfpicture}%
\caption{Tree decomposition}
\label{fig:decomposition}
\end{figure}
Clearly, the outdegree of the root of $D_v$ is $k'$.
In preorder, the vertex $v$ is the last leaf at level $\ell'$ in the tree $L$.

From $(T,v)$, we define a lattice path $P$ of length $(2n+\ell'+1)$ from $(0,0)$ to $(2n+\ell'+1, -\ell'-1)$ by
$$P = \psi(D_v) \searrow \varphi(R_1) \searrow \varphi(R_2) \searrow \dots \searrow \varphi(R_{\ell'}) \searrow \varphi(L),$$
where $\searrow$ means a down-step.
By convention, if $\ell'=0$.
We set $$P = \psi(D_v) \searrow \varphi(\emptyset).$$
Especially, if $\ell'>0$, the lattice path $P$ always starts with $k'$ consecutive up-steps and ends with one up-step and $\ell'$ consecutive down-steps as the Figure~\ref{fig:mainlemma}.
\begin{figure}[t]
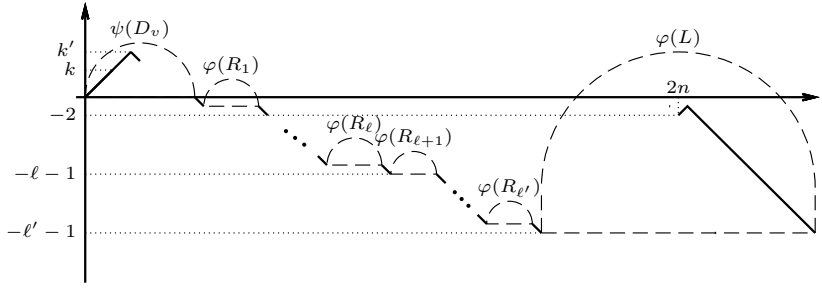

\centering
\begin{pgfpicture}{-17.87mm}{-26.60mm}{86.00mm}{14.00mm}
\pgfsetxvec{\pgfpoint{0.60mm}{0mm}}
\pgfsetyvec{\pgfpoint{0mm}{0.60mm}}
\color[rgb]{0,0,0}\pgfsetlinewidth{0.30mm}\pgfsetdash{}{0mm}
\pgfsetdash{{0.15mm}{0.50mm}}{0mm}\pgfsetlinewidth{0.15mm}\pgfmoveto{\pgfxy(-20.00,-17.00)}\pgflineto{\pgfxy(47.00,-17.00)}\pgfstroke
\pgfmoveto{\pgfxy(-20.00,-4.00)}\pgflineto{\pgfxy(110.00,-4.00)}\pgfstroke
\pgfmoveto{\pgfxy(-20.00,-30.00)}\pgflineto{\pgfxy(80.00,-30.00)}\pgfstroke
\pgfsetdash{}{0mm}\pgfsetlinewidth{0.30mm}\pgfmoveto{\pgfxy(-22.00,0.00)}\pgflineto{\pgfxy(140.00,0.00)}\pgfstroke
\pgfmoveto{\pgfxy(140.00,0.00)}\pgflineto{\pgfxy(137.20,0.70)}\pgflineto{\pgfxy(140.00,0.00)}\pgflineto{\pgfxy(137.20,-0.70)}\pgflineto{\pgfxy(140.00,0.00)}\pgfclosepath\pgffill
\pgfmoveto{\pgfxy(140.00,0.00)}\pgflineto{\pgfxy(137.20,0.70)}\pgflineto{\pgfxy(140.00,0.00)}\pgflineto{\pgfxy(137.20,-0.70)}\pgflineto{\pgfxy(140.00,0.00)}\pgfclosepath\pgfstroke
\pgfmoveto{\pgfxy(-20.00,-41.00)}\pgflineto{\pgfxy(-20.00,20.00)}\pgfstroke
\pgfmoveto{\pgfxy(-20.00,20.00)}\pgflineto{\pgfxy(-20.70,17.20)}\pgflineto{\pgfxy(-20.00,20.00)}\pgflineto{\pgfxy(-19.30,17.20)}\pgflineto{\pgfxy(-20.00,20.00)}\pgfclosepath\pgffill
\pgfmoveto{\pgfxy(-20.00,20.00)}\pgflineto{\pgfxy(-20.70,17.20)}\pgflineto{\pgfxy(-20.00,20.00)}\pgflineto{\pgfxy(-19.30,17.20)}\pgflineto{\pgfxy(-20.00,20.00)}\pgfclosepath\pgfstroke
\pgfmoveto{\pgfxy(4.00,0.00)}\pgflineto{\pgfxy(6.00,-2.00)}\pgfstroke
\pgfmoveto{\pgfxy(18.00,-2.00)}\pgflineto{\pgfxy(20.00,-4.00)}\pgfstroke
\pgfmoveto{\pgfxy(66.00,-26.00)}\pgflineto{\pgfxy(68.00,-28.00)}\pgfstroke
\pgfmoveto{\pgfxy(78.00,-28.00)}\pgflineto{\pgfxy(80.00,-30.00)}\pgfstroke
\pgfmoveto{\pgfxy(-7.00,4.00)}\pgfclosepath\pgfstroke
\pgfsetdash{{2.00mm}{1.00mm}}{0mm}\pgfsetlinewidth{0.15mm}\pgfmoveto{\pgfxy(18.00,-2.00)}\pgfcurveto{\pgfxy(18.00,-0.41)}{\pgfxy(17.37,1.12)}{\pgfxy(16.24,2.24)}\pgfcurveto{\pgfxy(15.12,3.37)}{\pgfxy(13.59,4.00)}{\pgfxy(12.00,4.00)}\pgfcurveto{\pgfxy(10.41,4.00)}{\pgfxy(8.88,3.37)}{\pgfxy(7.76,2.24)}\pgfcurveto{\pgfxy(6.63,1.12)}{\pgfxy(6.00,-0.41)}{\pgfxy(6.00,-2.00)}\pgfclosepath\pgfstroke
\pgfmoveto{\pgfxy(78.00,-28.00)}\pgfcurveto{\pgfxy(78.00,-26.67)}{\pgfxy(77.47,-25.40)}{\pgfxy(76.54,-24.46)}\pgfcurveto{\pgfxy(75.60,-23.53)}{\pgfxy(74.33,-23.00)}{\pgfxy(73.00,-23.00)}\pgfcurveto{\pgfxy(71.67,-23.00)}{\pgfxy(70.40,-23.53)}{\pgfxy(69.46,-24.46)}\pgfcurveto{\pgfxy(68.53,-25.40)}{\pgfxy(68.00,-26.67)}{\pgfxy(68.00,-28.00)}\pgfclosepath\pgfstroke
\pgfputat{\pgfxy(-22.00,-31.00)}{\pgfbox[bottom,left]{\fontsize{6.83}{8.19}\selectfont \makebox[0pt][r]{$-\ell'-1$}}}
\pgfputat{\pgfxy(-8.00,14.00)}{\pgfbox[bottom,left]{\fontsize{6.83}{8.19}\selectfont \makebox[0pt]{$\psi(D_v)$}}}
\pgfputat{\pgfxy(12.00,6.00)}{\pgfbox[bottom,left]{\fontsize{6.83}{8.19}\selectfont \makebox[0pt]{$\varphi(R_1)$}}}
\pgfputat{\pgfxy(73.00,-21.00)}{\pgfbox[bottom,left]{\fontsize{6.83}{8.19}\selectfont \makebox[0pt]{$\varphi(R_{\ell'})$}}}
\pgfsetdash{}{0mm}\pgfsetlinewidth{0.30mm}\pgfmoveto{\pgfxy(-20.00,0.00)}\pgflineto{\pgfxy(-10.00,10.00)}\pgfstroke
\pgfsetdash{{2.00mm}{1.00mm}}{0mm}\pgfsetlinewidth{0.15mm}\pgfmoveto{\pgfxy(4.00,0.00)}\pgfcurveto{\pgfxy(4.00,3.18)}{\pgfxy(2.74,6.23)}{\pgfxy(0.49,8.49)}\pgfcurveto{\pgfxy(-1.77,10.74)}{\pgfxy(-4.82,12.00)}{\pgfxy(-8.00,12.00)}\pgfcurveto{\pgfxy(-11.18,12.00)}{\pgfxy(-14.23,10.74)}{\pgfxy(-16.49,8.49)}\pgfcurveto{\pgfxy(-18.74,6.23)}{\pgfxy(-20.00,3.18)}{\pgfxy(-20.00,0.00)}\pgfcurveto{\pgfxy(-17.00,0.00)}{\pgfxy(-11.00,0.00)}{\pgfxy(-8.00,0.00)}\pgfclosepath\pgfstroke
\pgfsetdash{}{0mm}\pgfsetlinewidth{0.30mm}\pgfmoveto{\pgfxy(-20.00,0.00)}\pgflineto{\pgfxy(-14.00,6.00)}\pgfstroke
\pgfsetdash{{2.00mm}{1.00mm}}{0mm}\pgfsetlinewidth{0.15mm}\pgfmoveto{\pgfxy(45.00,-15.00)}\pgfcurveto{\pgfxy(45.00,-13.41)}{\pgfxy(44.37,-11.88)}{\pgfxy(43.24,-10.76)}\pgfcurveto{\pgfxy(42.12,-9.63)}{\pgfxy(40.59,-9.00)}{\pgfxy(39.00,-9.00)}\pgfcurveto{\pgfxy(37.41,-9.00)}{\pgfxy(35.88,-9.63)}{\pgfxy(34.76,-10.76)}\pgfcurveto{\pgfxy(33.63,-11.88)}{\pgfxy(33.00,-13.41)}{\pgfxy(33.00,-15.00)}\pgfcurveto{\pgfxy(34.50,-15.00)}{\pgfxy(37.50,-15.00)}{\pgfxy(39.00,-15.00)}\pgfclosepath\pgfstroke
\pgfsetdash{}{0mm}\pgfsetlinewidth{0.30mm}\pgfmoveto{\pgfxy(31.00,-13.00)}\pgflineto{\pgfxy(33.00,-15.00)}\pgfstroke
\pgfputat{\pgfxy(39.00,-7.00)}{\pgfbox[bottom,left]{\fontsize{6.83}{8.19}\selectfont \makebox[0pt]{$\varphi(R_{\ell})$}}}
\pgfputat{\pgfxy(-22.00,5.00)}{\pgfbox[bottom,left]{\fontsize{6.83}{8.19}\selectfont \makebox[0pt][r]{$k$}}}
\pgfsetdash{{0.15mm}{0.50mm}}{0mm}\pgfsetlinewidth{0.15mm}\pgfmoveto{\pgfxy(-20.00,6.00)}\pgflineto{\pgfxy(-14.00,6.00)}\pgfstroke
\pgfmoveto{\pgfxy(-20.00,10.00)}\pgflineto{\pgfxy(-10.00,10.00)}\pgfstroke
\pgfputat{\pgfxy(-22.00,9.00)}{\pgfbox[bottom,left]{\fontsize{6.83}{8.19}\selectfont \makebox[0pt][r]{$k'$}}}
\pgfsetdash{}{0mm}\pgfsetlinewidth{0.30mm}\pgfmoveto{\pgfxy(-10.00,10.00)}\pgflineto{\pgfxy(-8.00,8.00)}\pgfstroke
\pgfmoveto{\pgfxy(118.00,-30.00)}\pgflineto{\pgfxy(118.00,-30.00)}\pgfstroke
\pgfmoveto{\pgfxy(110.00,-4.00)}\pgflineto{\pgfxy(112.00,-2.00)}\pgflineto{\pgfxy(140.00,-30.00)}\pgfstroke
\pgfsetdash{{2.00mm}{1.00mm}}{0mm}\pgfsetlinewidth{0.15mm}\pgfmoveto{\pgfxy(140.00,-20.00)}\pgfcurveto{\pgfxy(140.00,-12.04)}{\pgfxy(136.84,-4.41)}{\pgfxy(131.21,1.21)}\pgfcurveto{\pgfxy(125.59,6.84)}{\pgfxy(117.96,10.00)}{\pgfxy(110.00,10.00)}\pgfcurveto{\pgfxy(102.04,10.00)}{\pgfxy(94.41,6.84)}{\pgfxy(88.79,1.21)}\pgfcurveto{\pgfxy(83.16,-4.41)}{\pgfxy(80.00,-12.04)}{\pgfxy(80.00,-20.00)}\pgfstroke
\pgfmoveto{\pgfxy(80.00,-30.00)}\pgflineto{\pgfxy(80.00,-20.00)}\pgfstroke
\pgfmoveto{\pgfxy(140.00,-30.00)}\pgflineto{\pgfxy(140.00,-20.00)}\pgfstroke
\pgfputat{\pgfxy(110.00,1.00)}{\pgfbox[bottom,left]{\fontsize{6.83}{8.19}\selectfont \makebox[0pt]{$2n$}}}
\pgfmoveto{\pgfxy(80.00,-30.00)}\pgflineto{\pgfxy(140.00,-30.00)}\pgfstroke
\pgfputat{\pgfxy(110.00,12.00)}{\pgfbox[bottom,left]{\fontsize{6.83}{8.19}\selectfont \makebox[0pt]{$\varphi(L)$}}}
\pgfsetdash{}{0mm}\pgfsetlinewidth{0.30mm}\pgfmoveto{\pgfxy(108.00,-2.00)}\pgflineto{\pgfxy(108.00,-2.00)}\pgfstroke
\pgfputat{\pgfxy(-22.00,-5.00)}{\pgfbox[bottom,left]{\fontsize{6.83}{8.19}\selectfont \makebox[0pt][r]{$-2$}}}
\pgfmoveto{\pgfxy(45.00,-15.00)}\pgflineto{\pgfxy(47.00,-17.00)}\pgfstroke
\pgfsetdash{{2.00mm}{1.00mm}}{0mm}\pgfsetlinewidth{0.15mm}\pgfmoveto{\pgfxy(57.00,-17.00)}\pgfcurveto{\pgfxy(57.00,-15.67)}{\pgfxy(56.47,-14.40)}{\pgfxy(55.54,-13.46)}\pgfcurveto{\pgfxy(54.60,-12.53)}{\pgfxy(53.33,-12.00)}{\pgfxy(52.00,-12.00)}\pgfcurveto{\pgfxy(50.67,-12.00)}{\pgfxy(49.40,-12.53)}{\pgfxy(48.46,-13.46)}\pgfcurveto{\pgfxy(47.53,-14.40)}{\pgfxy(47.00,-15.67)}{\pgfxy(47.00,-17.00)}\pgfclosepath\pgfstroke
\pgfputat{\pgfxy(52.00,-10.00)}{\pgfbox[bottom,left]{\fontsize{6.83}{8.19}\selectfont \makebox[0pt]{$\varphi(R_{\ell+1})$}}}
\pgfputat{\pgfxy(-22.00,-18.00)}{\pgfbox[bottom,left]{\fontsize{6.83}{8.19}\selectfont \makebox[0pt][r]{$-\ell-1$}}}
\pgfsetdash{}{0mm}\pgfsetlinewidth{0.30mm}\pgfmoveto{\pgfxy(57.00,-17.00)}\pgflineto{\pgfxy(59.00,-19.00)}\pgfstroke
\pgfsetdash{{0.15mm}{0.50mm}}{0mm}\pgfsetlinewidth{0.15mm}\pgfmoveto{\pgfxy(110.00,0.00)}\pgflineto{\pgfxy(110.00,-4.00)}\pgfstroke
\pgfcircle[fill]{\pgfxy(61.00,-21.00)}{0.12mm}
\pgfsetdash{}{0mm}\pgfsetlinewidth{0.30mm}\pgfcircle[stroke]{\pgfxy(61.00,-21.00)}{0.12mm}
\pgfcircle[fill]{\pgfxy(62.50,-22.50)}{0.12mm}
\pgfcircle[stroke]{\pgfxy(62.50,-22.50)}{0.12mm}
\pgfcircle[fill]{\pgfxy(64.00,-24.00)}{0.12mm}
\pgfcircle[stroke]{\pgfxy(64.00,-24.00)}{0.12mm}
\pgfcircle[fill]{\pgfxy(24.00,-7.50)}{0.12mm}
\pgfcircle[stroke]{\pgfxy(24.00,-7.50)}{0.12mm}
\pgfcircle[fill]{\pgfxy(26.00,-9.00)}{0.12mm}
\pgfcircle[stroke]{\pgfxy(26.00,-9.00)}{0.12mm}
\pgfcircle[fill]{\pgfxy(28.00,-10.50)}{0.12mm}
\pgfcircle[stroke]{\pgfxy(28.00,-10.50)}{0.12mm}
\end{pgfpicture}%
\caption{Outline of a lattice path $P$ induced from tree decomposition}
\label{fig:mainlemma}
\end{figure}

When we draw the line $y=-\ell-1$,
because the $y$-coordinate of the lowest point of $P$ is $-\ell'-1$,
this line should meet the lattice path $P$.
Denote $p$ by the first meeting point of $P$ and $y=-\ell-1$.
Replacing the portion of $P$ from $p$ to $(2n+\ell'+1, -\ell'-1)$
by its reflection about $y=-\ell-1$,
we obtain the lattice path $\tilde{P}$ from $(0, 0)$ to $(2n+\ell'+1, -2\ell+\ell'-1)$,
which also always starts with $k'$ consecutive up-steps
and ends with one down-step and $\ell'$ consecutive up-steps
as the Figure~\ref{fig:mainlemma1}, even if $\ell'=0$.
\begin{figure}[t]
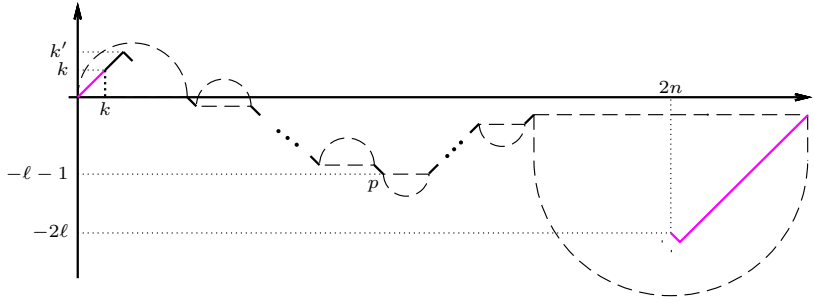

\centering
\begin{pgfpicture}{-19.07mm}{-28.32mm}{86.00mm}{14.00mm}
\pgfsetxvec{\pgfpoint{0.60mm}{0mm}}
\pgfsetyvec{\pgfpoint{0mm}{0.60mm}}
\color[rgb]{0,0,0}\pgfsetlinewidth{0.30mm}\pgfsetdash{}{0mm}
\pgfsetdash{{0.15mm}{0.50mm}}{0mm}\pgfsetlinewidth{0.15mm}\pgfmoveto{\pgfxy(-20.00,-17.00)}\pgflineto{\pgfxy(47.00,-17.00)}\pgfstroke
\pgfmoveto{\pgfxy(-20.00,-30.00)}\pgflineto{\pgfxy(110.00,-30.00)}\pgfstroke
\pgfsetdash{}{0mm}\pgfsetlinewidth{0.30mm}\pgfmoveto{\pgfxy(-22.00,0.00)}\pgflineto{\pgfxy(140.00,0.00)}\pgfstroke
\pgfmoveto{\pgfxy(140.00,0.00)}\pgflineto{\pgfxy(137.20,0.70)}\pgflineto{\pgfxy(140.00,0.00)}\pgflineto{\pgfxy(137.20,-0.70)}\pgflineto{\pgfxy(140.00,0.00)}\pgfclosepath\pgffill
\pgfmoveto{\pgfxy(140.00,0.00)}\pgflineto{\pgfxy(137.20,0.70)}\pgflineto{\pgfxy(140.00,0.00)}\pgflineto{\pgfxy(137.20,-0.70)}\pgflineto{\pgfxy(140.00,0.00)}\pgfclosepath\pgfstroke
\pgfmoveto{\pgfxy(-20.00,-40.00)}\pgflineto{\pgfxy(-20.00,20.00)}\pgfstroke
\pgfmoveto{\pgfxy(-20.00,20.00)}\pgflineto{\pgfxy(-20.70,17.20)}\pgflineto{\pgfxy(-20.00,20.00)}\pgflineto{\pgfxy(-19.30,17.20)}\pgflineto{\pgfxy(-20.00,20.00)}\pgfclosepath\pgffill
\pgfmoveto{\pgfxy(-20.00,20.00)}\pgflineto{\pgfxy(-20.70,17.20)}\pgflineto{\pgfxy(-20.00,20.00)}\pgflineto{\pgfxy(-19.30,17.20)}\pgflineto{\pgfxy(-20.00,20.00)}\pgfclosepath\pgfstroke
\pgfmoveto{\pgfxy(4.00,0.00)}\pgflineto{\pgfxy(6.00,-2.00)}\pgfstroke
\pgfmoveto{\pgfxy(18.00,-2.00)}\pgflineto{\pgfxy(20.00,-4.00)}\pgfstroke
\pgfmoveto{\pgfxy(66.00,-7.86)}\pgflineto{\pgfxy(68.00,-5.86)}\pgfstroke
\pgfmoveto{\pgfxy(78.00,-5.86)}\pgflineto{\pgfxy(80.00,-3.86)}\pgfstroke
\pgfmoveto{\pgfxy(-7.00,4.00)}\pgfclosepath\pgfstroke
\pgfsetdash{{2.00mm}{1.00mm}}{0mm}\pgfsetlinewidth{0.15mm}\pgfmoveto{\pgfxy(18.00,-2.00)}\pgfcurveto{\pgfxy(18.00,-0.41)}{\pgfxy(17.37,1.12)}{\pgfxy(16.24,2.24)}\pgfcurveto{\pgfxy(15.12,3.37)}{\pgfxy(13.59,4.00)}{\pgfxy(12.00,4.00)}\pgfcurveto{\pgfxy(10.41,4.00)}{\pgfxy(8.88,3.37)}{\pgfxy(7.76,2.24)}\pgfcurveto{\pgfxy(6.63,1.12)}{\pgfxy(6.00,-0.41)}{\pgfxy(6.00,-2.00)}\pgfclosepath\pgfstroke
\pgfmoveto{\pgfxy(68.00,-6.00)}\pgfcurveto{\pgfxy(68.04,-7.30)}{\pgfxy(68.58,-8.54)}{\pgfxy(69.51,-9.45)}\pgfcurveto{\pgfxy(70.45,-10.36)}{\pgfxy(71.70,-10.87)}{\pgfxy(73.00,-10.87)}\pgfcurveto{\pgfxy(74.30,-10.87)}{\pgfxy(75.55,-10.36)}{\pgfxy(76.49,-9.45)}\pgfcurveto{\pgfxy(77.42,-8.54)}{\pgfxy(77.96,-7.30)}{\pgfxy(78.00,-6.00)}\pgfclosepath\pgfstroke
\pgfsetdash{}{0mm}\pgfsetlinewidth{0.30mm}\pgfmoveto{\pgfxy(-14.00,6.00)}\pgflineto{\pgfxy(-10.00,10.00)}\pgfstroke
\pgfsetdash{{2.00mm}{1.00mm}}{0mm}\pgfsetlinewidth{0.15mm}\pgfmoveto{\pgfxy(4.00,0.00)}\pgfcurveto{\pgfxy(4.00,3.18)}{\pgfxy(2.74,6.23)}{\pgfxy(0.49,8.49)}\pgfcurveto{\pgfxy(-1.77,10.74)}{\pgfxy(-4.82,12.00)}{\pgfxy(-8.00,12.00)}\pgfcurveto{\pgfxy(-11.18,12.00)}{\pgfxy(-14.23,10.74)}{\pgfxy(-16.49,8.49)}\pgfcurveto{\pgfxy(-18.74,6.23)}{\pgfxy(-20.00,3.18)}{\pgfxy(-20.00,0.00)}\pgfcurveto{\pgfxy(-17.00,0.00)}{\pgfxy(-11.00,0.00)}{\pgfxy(-8.00,0.00)}\pgfclosepath\pgfstroke
\pgfmoveto{\pgfxy(45.00,-15.00)}\pgfcurveto{\pgfxy(45.00,-13.41)}{\pgfxy(44.37,-11.88)}{\pgfxy(43.24,-10.76)}\pgfcurveto{\pgfxy(42.12,-9.63)}{\pgfxy(40.59,-9.00)}{\pgfxy(39.00,-9.00)}\pgfcurveto{\pgfxy(37.41,-9.00)}{\pgfxy(35.88,-9.63)}{\pgfxy(34.76,-10.76)}\pgfcurveto{\pgfxy(33.63,-11.88)}{\pgfxy(33.00,-13.41)}{\pgfxy(33.00,-15.00)}\pgfcurveto{\pgfxy(34.50,-15.00)}{\pgfxy(37.50,-15.00)}{\pgfxy(39.00,-15.00)}\pgfclosepath\pgfstroke
\pgfsetdash{}{0mm}\pgfsetlinewidth{0.30mm}\pgfmoveto{\pgfxy(31.00,-13.00)}\pgflineto{\pgfxy(33.00,-15.00)}\pgfstroke
\pgfputat{\pgfxy(-22.00,5.00)}{\pgfbox[bottom,left]{\fontsize{6.83}{8.19}\selectfont \makebox[0pt][r]{$k$}}}
\pgfsetdash{{0.15mm}{0.50mm}}{0mm}\pgfsetlinewidth{0.15mm}\pgfmoveto{\pgfxy(-20.00,6.00)}\pgflineto{\pgfxy(-14.00,6.00)}\pgfstroke
\pgfmoveto{\pgfxy(-20.00,10.00)}\pgflineto{\pgfxy(-10.00,10.00)}\pgfstroke
\pgfputat{\pgfxy(-22.00,9.00)}{\pgfbox[bottom,left]{\fontsize{6.83}{8.19}\selectfont \makebox[0pt][r]{$k'$}}}
\pgfsetdash{}{0mm}\pgfsetlinewidth{0.30mm}\pgfmoveto{\pgfxy(-10.00,10.00)}\pgflineto{\pgfxy(-8.00,8.00)}\pgfstroke
\pgfmoveto{\pgfxy(118.00,-3.86)}\pgflineto{\pgfxy(118.00,-3.86)}\pgfstroke
\pgfsetdash{{2.00mm}{1.00mm}}{0mm}\pgfsetlinewidth{0.15mm}\pgfmoveto{\pgfxy(80.00,-14.00)}\pgfcurveto{\pgfxy(80.04,-21.93)}{\pgfxy(83.21,-29.53)}{\pgfxy(88.83,-35.13)}\pgfcurveto{\pgfxy(94.46,-40.72)}{\pgfxy(102.07,-43.86)}{\pgfxy(110.00,-43.86)}\pgfcurveto{\pgfxy(117.93,-43.86)}{\pgfxy(125.54,-40.72)}{\pgfxy(131.17,-35.13)}\pgfcurveto{\pgfxy(136.79,-29.53)}{\pgfxy(139.96,-21.93)}{\pgfxy(140.00,-14.00)}\pgfstroke
\pgfmoveto{\pgfxy(80.00,-3.86)}\pgflineto{\pgfxy(80.00,-13.86)}\pgfstroke
\pgfmoveto{\pgfxy(140.00,-3.86)}\pgflineto{\pgfxy(140.00,-14.00)}\pgfstroke
\pgfputat{\pgfxy(110.00,1.00)}{\pgfbox[bottom,left]{\fontsize{6.83}{8.19}\selectfont \makebox[0pt]{$2n$}}}
\pgfmoveto{\pgfxy(80.00,-3.86)}\pgflineto{\pgfxy(140.00,-3.86)}\pgfstroke
\pgfsetdash{}{0mm}\pgfsetlinewidth{0.30mm}\pgfmoveto{\pgfxy(108.00,-31.86)}\pgflineto{\pgfxy(108.00,-31.86)}\pgfstroke
\pgfputat{\pgfxy(-22.00,-31.00)}{\pgfbox[bottom,left]{\fontsize{6.83}{8.19}\selectfont \makebox[0pt][r]{$-2\ell$}}}
\pgfmoveto{\pgfxy(45.00,-15.00)}\pgflineto{\pgfxy(47.00,-17.00)}\pgfstroke
\pgfsetdash{{2.00mm}{1.00mm}}{0mm}\pgfsetlinewidth{0.15mm}\pgfmoveto{\pgfxy(47.00,-17.00)}\pgfcurveto{\pgfxy(47.04,-18.30)}{\pgfxy(47.58,-19.54)}{\pgfxy(48.51,-20.45)}\pgfcurveto{\pgfxy(49.45,-21.36)}{\pgfxy(50.70,-21.87)}{\pgfxy(52.00,-21.87)}\pgfcurveto{\pgfxy(53.30,-21.87)}{\pgfxy(54.55,-21.36)}{\pgfxy(55.49,-20.45)}\pgfcurveto{\pgfxy(56.42,-19.54)}{\pgfxy(56.96,-18.30)}{\pgfxy(57.00,-17.00)}\pgfclosepath\pgfstroke
\pgfputat{\pgfxy(-22.00,-18.00)}{\pgfbox[bottom,left]{\fontsize{6.83}{8.19}\selectfont \makebox[0pt][r]{$-\ell-1$}}}
\pgfsetdash{}{0mm}\pgfsetlinewidth{0.30mm}\pgfmoveto{\pgfxy(57.00,-16.86)}\pgflineto{\pgfxy(59.00,-14.86)}\pgfstroke
\pgfputat{\pgfxy(46.00,-20.00)}{\pgfbox[bottom,left]{\fontsize{6.83}{8.19}\selectfont \makebox[0pt][r]{$p$}}}
\pgfsetdash{{0.15mm}{0.50mm}}{0mm}\pgfsetlinewidth{0.15mm}\pgfmoveto{\pgfxy(110.00,-30.00)}\pgflineto{\pgfxy(110.00,0.00)}\pgfstroke
\color[rgb]{1,0,1}\pgfsetdash{}{0mm}\pgfsetlinewidth{0.30mm}\pgfmoveto{\pgfxy(-20.00,0.00)}\pgflineto{\pgfxy(-14.00,6.00)}\pgfstroke
\color[rgb]{0,0,0}\pgfmoveto{\pgfxy(110.00,-34.00)}\pgflineto{\pgfxy(110.00,-34.00)}\pgfstroke
\color[rgb]{1,0,1}\pgfmoveto{\pgfxy(110.00,-30.00)}\pgflineto{\pgfxy(112.00,-32.00)}\pgflineto{\pgfxy(140.00,-4.00)}\pgfstroke
\color[rgb]{0,0,0}\pgfsetdash{{0.30mm}{0.50mm}}{0mm}\pgfmoveto{\pgfxy(-14.00,6.00)}\pgflineto{\pgfxy(-14.00,0.00)}\pgfstroke
\pgfputat{\pgfxy(-14.00,-4.00)}{\pgfbox[bottom,left]{\fontsize{6.83}{8.19}\selectfont \makebox[0pt]{$k$}}}
\pgfcircle[fill]{\pgfxy(24.00,-7.50)}{0.12mm}
\pgfsetdash{}{0mm}\pgfcircle[stroke]{\pgfxy(24.00,-7.50)}{0.12mm}
\pgfcircle[fill]{\pgfxy(26.00,-9.00)}{0.12mm}
\pgfcircle[stroke]{\pgfxy(26.00,-9.00)}{0.12mm}
\pgfcircle[fill]{\pgfxy(28.00,-10.50)}{0.12mm}
\pgfcircle[stroke]{\pgfxy(28.00,-10.50)}{0.12mm}
\pgfcircle[fill]{\pgfxy(61.00,-13.00)}{0.12mm}
\pgfcircle[stroke]{\pgfxy(61.00,-13.00)}{0.12mm}
\pgfcircle[fill]{\pgfxy(62.50,-11.50)}{0.12mm}
\pgfcircle[stroke]{\pgfxy(62.50,-11.50)}{0.12mm}
\pgfcircle[fill]{\pgfxy(64.00,-10.00)}{0.12mm}
\pgfcircle[stroke]{\pgfxy(64.00,-10.00)}{0.12mm}
\end{pgfpicture}%
\caption{Outline of a lattice path $\tilde{P}$ and $\hat{P}$ by reflection about $y=-\ell-1$}
\label{fig:mainlemma1}
\end{figure}
Note that, if $\ell'=0$, $p$ should be the last point $(2n+1, -1)$ of $P$ and ${\tilde P}=P$.

By removing the first $k$ steps and the last $(\ell'+1)$ steps from $\tilde{P}$, we obtain the lattice path $\hat{P}$ from $(k, k)$ to $(2n, -2\ell)$.
Thus $\hat{P}$ belongs to $\mathcal{L}$ and define $\Phi(T,v) = \hat{P}$.
Since the reflection and the removal are reversible,
$\Phi$ is a bijection.
\end{proof}

\begin{thm}
Given $n \ge 1$,
for any two nonnegative integers $k$ and $\ell$,
the number of vertices of outdegree $k$ at level $\ell$ among trees in $\T_n$ is
\begin{align}
\label{eq:enumer}
\frac{k+2\ell}{2n-k} \binom{2n -k }{ n +\ell}.
\end{align}
\end{thm}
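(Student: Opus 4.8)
The plan is to recover the \emph{exact} count from the \emph{cumulative} count supplied by the Main lemma by means of a two-dimensional finite difference. Write $N(k,\ell)$ for the number of pairs $(T,v)\in\TV_n$ in which $v$ has outdegree at least $k$ and lies at level at least $\ell$; the Main lemma asserts $N(k,\ell)=\binom{2n-k}{n+\ell}$ for all nonnegative integers $k,\ell$. Let $M(k,\ell)$ be the quantity sought, namely the number of such pairs with outdegree \emph{exactly} $k$ at level \emph{exactly} $\ell$. First I would pass from ``at least'' to ``exactly'' one coordinate at a time: subtracting the vertices of larger outdegree and then those at higher level gives the inclusion--exclusion
\begin{align*}
M(k,\ell)=N(k,\ell)-N(k+1,\ell)-N(k,\ell+1)+N(k+1,\ell+1).
\end{align*}
Every term on the right is legitimate, since the Main lemma is stated for all nonnegative arguments (and automatically returns $0$ once $k+\ell>n$, matching the fact that no vertex can have outdegree $k$ at level $\ell$ when $k+\ell$ exceeds the number of edges).

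Substituting the closed form of the Main lemma turns $M(k,\ell)$ into an alternating sum of four binomial coefficients,
\begin{align*}
\binom{2n-k}{n+\ell}-\binom{2n-k-1}{n+\ell}-\binom{2n-k}{n+\ell+1}+\binom{2n-k-1}{n+\ell+1}.
\end{align*}
Next I would collapse this by Pascal's rule applied to the two natural pairs: the first two terms combine to $\binom{2n-k-1}{n+\ell-1}$ and the last two to $-\binom{2n-k-1}{n+\ell}$, so the expression telescopes to the single difference $\binom{2n-k-1}{n+\ell-1}-\binom{2n-k-1}{n+\ell}$.

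Finally I would evaluate this difference using the elementary identity $\binom{m}{a}-\binom{m}{a+1}=\frac{2a+1-m}{a+1}\binom{m}{a}$ with $m=2n-k-1$ and $a=n+\ell-1$, for which $2a+1-m=k+2\ell$ and $a+1=n+\ell$. This produces $\frac{k+2\ell}{n+\ell}\binom{2n-k-1}{n+\ell-1}$, which coincides with the desired $\frac{k+2\ell}{2n-k}\binom{2n-k}{n+\ell}$ once the factor $n+\ell$ (respectively $2n-k$) is absorbed into the binomial coefficient, both sides equaling $(k+2\ell)\,\frac{(2n-k-1)!}{(n+\ell)!\,(n-k-\ell)!}$. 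Because the Main lemma already carries all of the combinatorial weight through the bijection $\Phi$, no further construction is needed here; the only point demanding care is the bookkeeping of the inclusion--exclusion, so that the double difference really does isolate outdegree exactly $k$ and level exactly $\ell$, after which the binomial manipulation is entirely routine.
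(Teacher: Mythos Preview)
Your proof is correct and follows exactly the paper's approach: the paper also derives the exact count from the Main lemma by the same four-term sieve $\binom{2n-k}{n+\ell}-\binom{2n-k-1}{n+\ell}-\binom{2n-k}{n+\ell+1}+\binom{2n-k-1}{n+\ell+1}$, which is precisely your $N(k,\ell)-N(k+1,\ell)-N(k,\ell+1)+N(k+1,\ell+1)$. Your write-up is in fact more detailed than the paper's, which simply states the alternating sum and leaves the simplification to the desired closed form implicit, whereas you carry out the Pascal-rule collapse and the final binomial identity explicitly.
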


\begin{proof}
Using a sieve method, we obtain \eqref{eq:enumer} from \eqref{eq:lem} by
\begin{align*}
  \binom{2n -k   }{ n +\ell  }
- \binom{2n -k-1 }{ n +\ell  }
- \binom{2n -k   }{ n +\ell+1}
+ \binom{2n -k-1 }{ n +\ell+1}.
\end{align*}
\end{proof}

Thus, obtaining \eqref{eq:main} from \eqref{eq:enumer} by changing the index $\ell$,
we may count the set $\mathcal{D}$ as \eqref{item:c4} of Theorem~\ref{thm:main}.

\section*{Acknowledgements}

For the first author, this research was partially supported by Ministry of Science of Technology, Taiwan ROC under grants 104-2115-M-003-014-MY3.
For the second author, this study has been worked with the support of a research grant of Kangwon National University in 2015.
For the third author, this work was supported by Inha University Research Grant.



\end{document}